\newcommand{\C}{{\mathbb C}}
\newcommand{\N}{{\mathbb N}}
\newcommand{\R}{{\mathbb R}}
\newcommand{\Z}{{\mathbb Z}}
\newcommand{\abs}[2][\empty]{\ifx#1\empty\left|#2\right|%
\else#1\vert #2 #1\vert\fi}
\newcommand{\defstyle}[1]{#1}
\newcommand{\eps}{\varepsilon}
\newcommand{\Fourier}{\mathcal F}
\newcommand{\fourier}{\widehat}
\renewcommand{\implies}{\Rightarrow}
\newcommand{\norm}[2][\empty]{\ifx#1\empty\left\Vert#2\right\Vert%
\else#1\Vert #2 #1\Vert\fi}
\newcommand{\Sphere}{\mathbf S}
\newcommand{\Schwartz}{\mathscr{S}}
\newcommand{\test}{\mathcal D}
\newcommand{\Gen}{{\mathcal G}}
\newcommand{\GenC}{\widetilde\C}
\newcommand{\GenR}{\widetilde\R}
\newcommand{\EMod}{{\mathcal E}}
\newcommand{\Null}{{\mathcal N}}
\newcommand{\caninf}{\rho}
\newcommand{\Gentdinfty}{\mbox{\small \raisebox{-.2ex}{$\dot{\widetilde\Gen}$}}{}^\infty}
\newcommand{\slow}{\mathit{slow}}
\newcommand{\fast}{\mathit{fast}}
\newcommand{\monad}{\mu}
\newtheorem{thm}{Theorem}[section]
\newtheorem{lemma}[thm]{Lemma}
\newtheorem{prop}[thm]{Proposition}
\newtheorem{cor}[thm]{Corollary}
\theoremstyle{definition}
\newtheorem{df}[thm]{Definition}
\newtheorem{rem}[thm]{Remark}
\begin{document}
\title[Microlocal analysis based on generalized points and generalized directions]{Microlocal analysis in generalized function algebras based on generalized points and generalized directions}
\author{Hans Vernaeve}
\thanks{Supported by research grant 1.5.138.13N of the Research Foundation Flanders FWO}
\address{Ghent University, Krijgslaan 281\\B-9000 Gent, Belgium.}
\email{hvernaev@cage.ugent.be}

\subjclass[2000]{Primary 46F30, 35A27; Secondary 35D10}
\keywords{Algebras of generalized functions, microlocal analysis}
\date{}

\begin{abstract}
We develop a refined theory of microlocal analysis in the algebra $\Gen(\Omega)$ of Colombeau generalized functions. In our approach, the wave front is a set of generalized points in the cotangent bundle of $\Omega$, whereas in the theory developed so far, it is a set of nongeneralized points. We also show consistency between both approaches.
\end{abstract}
\maketitle

\section{Introduction}
Differential algebras of generalized functions containing the space of Schwartz distributions have been constructed starting with the work of J.F. Colombeau \cite{Colombeau84,Colombeau85}. The theory has found diverse applications in the study of partial differential equations \cite{Hormann-DeHoop,Hormann-Ober-Pilipovic,Ob92}, providing a framework in which nonlinear equations and equations with strongly singular data or coefficients can be solved and in which their regularity can be analyzed.

The natural extension of microlocal analysis of Schwartz distributions to the Colombeau generalized function algebras is the so-called $\Gen^\infty$-microlocal analysis, which has been developed using the concept of $\Gen^\infty$-regularity \cite{Dapic98}. In this setting, the $\Gen^\infty$-wave front set has been defined as a set of (nongeneralized) points in the cotangent bundle of the domain of the generalized function \cite{Hormann99,GarettoPhD}. On the other hand, generalized functions in these algebras can be viewed as pointwise functions on the so-called generalized points of their domain. Moreover, since equations with strongly singular data or coefficients in Colombeau algebras are modeled by regularization, the corresponding differential operators themselves become generalized operators \cite{Hormann-Ober-Pilipovic}. Hence it is to be expected that the most suitable setting to study the propagation of singularities under such operators is by means of generalized objects (generalized characteristic varieties, etc.). From this point of view, a natural refinement of $\Gen^\infty$-microlocal analysis is to define also the wave front set as a set of generalized points in the cotangent bundle of $\Omega$.

In this paper, we start the development of this refinement of $\Gen^\infty$-microlocal analysis, which we call $\widetilde\Gen^\infty$-microlocal analysis. This also allows us to look at smaller (generalized) neighbourhoods than in the classical theory. We were unable to develop the theory on the very fine scale of the so-called sharp neighbourhoods (both for points in the domain and for directions in the Fourier domain), but the theory works well if we confine ourselves to the so-called slow scale neighbourhoods (see below). We show that the projection of the wave front set in the first coordinate is the $\widetilde\Gen^\infty$-singular support (Theorem \ref{proj-of-WF-is-singular-support}) and we characterize $\Gen^\infty$-microlocal regularity (at nongeneralized points and in nongeneralized directions) in terms of $\widetilde\Gen^\infty$-microlocal regularity (Theorem \ref{consistency}).

\section{Preliminaries}
Let $\Omega\subseteq \R^d$ be open. We denote the (so-called special) Colombeau algebra on $\Omega$ by $\Gen(\Omega)$, the set of compactly supported points in $\Omega$ by $\widetilde\Omega_c$ and the ring of Colombeau generalized real (resp.\ complex) numbers by $\GenR$ (resp.\ $\GenC$). We denote by $[u_\eps]\in\Gen(\Omega)$ (resp.\ $[x_\eps]\in\GenR^d$) the element with representative $(u_\eps)_\eps$ (resp.\ $(x_\eps)_\eps$). Such $u=[u_\eps]\in\Gen(\Omega)$ be identified with the map $\widetilde\Omega_c\to\GenC$
\[u([x_\eps]):=[u_\eps(x_\eps)].\]
We refer to \cite[\S 1.2]{GKOS} for definitions and further properties. We further denote
\[\caninf:= [\eps]\in\GenR, \quad \Sphere := \{x\in\R^d: \abs x = 1\}, \quad \widetilde\Sphere := \{x\in\GenR^d: \abs x = 1\},\]
\[\R_{>0}:= \{x\in\R: x>0\},\quad \GenR_{>0}:= \{x\in\GenR: x\ge 0, x \text{ invertible}\},\]
\[[x_\eps]\approx 0\iff x_\eps\to 0\text{ as }\eps\to 0.\]
We call $x\in\GenR^d$ \defstyle{fast scale} if $x$ belongs to
\[\GenR^d_{fs}:= \{x\in\GenR^d: (\exists a\in\R_{>0}) (\abs x\ge \caninf^{-a})\}\]
and we call $x$ \defstyle{slow scale} if $x$ belongs to
\[\GenR^d_{ss}:=\{x\in\GenR^d: (\forall a\in\R_{>0}) (\abs x\le \caninf^{-a})\}.\]
Due to the fact that the order on $\GenR$ is not total, $\GenR^d_{fs}\cup \GenR^d_{ss}\subsetneq \GenR^d$.\\
We call $x\in\GenR^d$ a \defstyle{slow scale infinitesimal} (notation: $x\approx_\slow 0$) if $x\approx 0$ and $\frac1{\abs{x}}$ is slow scale, i.e., if
\[\caninf^a\le \abs{x}\le a,\quad \forall a\in\R_{>0}\]
and we call $x$ a \defstyle{fast scale infinitesimal} (notation: $x\approx_\fast 0$) if 
\[\abs{x}\le \caninf^a,\quad \text{for some } a\in\R_{>0}.\]
The net $(x_\eps)_\eps$ corresponding to a fast scale (resp.\ slow scale) infinitesimal is said to tend slow scale (resp.\ fast scale) to $0$. A fast-scale infinitesimal (and the corresponding net) is also sometimes called `strongly associated with $0$'. We write $x\approx_\fast y$ or $x\in\monad_\fast(y)$ (resp.\ $x\approx_\slow y$ or $x\in\monad_\slow(y)$) for $x-y\approx_\fast 0$ (resp.\ $x-y\approx_\slow 0$).\\
We call a \defstyle{slow scale neighbourhood} of $x_0\in\GenR^d$ any set that contains $\{x\in\GenR^d: \abs{x-x_0}\le r\}$ for some $r\approx_\slow 0$ ($r\in\GenR_{>0}$).
A \defstyle{conic slow scale neighbourhood} of $\xi_0\in \widetilde\Sphere$ is a cone $\Gamma\subseteq\GenR^d$ with vertex $0$ that contains a slow scale neighbourhood of $\xi_0$ (i.e., there exists some $r\approx_\slow 0$ ($r\in\GenR_{>0}$) such that $\abs[\big]{\frac{\xi}{\abs\xi}-\xi_0}\le r\implies \xi \in \Gamma$). We write $\Gamma_\fast(\xi_0):=\{\xi\in \GenR^d: \frac{\xi}{\abs\xi}\approx_\fast \xi_0\}$.

In \cite{Hormann99,GarettoPhD}, $u=[u_\eps]\in\Gen(\Omega)$ is called $\Gen^\infty$-microlocally regular at $(x_0,\xi_0)\in\Omega\times \Sphere$ if there exist $\phi\in\test(\Omega)$ with $\phi(x_0)=1$, a (nongeneralized) conic neighbourhood $\Gamma$ of $\xi_0$ and $N\in\N$ such that for all $m\in\N$
\begin{equation}\label{eq-classical-regularity}
\sup_{\xi\in\Gamma}\langle\xi\rangle^m\abs[\big]{\fourier{\phi u_\eps}(\xi)}\le\eps^{-N},\quad\text{ for small $\eps$}.
\end{equation}

\section{$\widetilde\Gen^\infty$-microlocal regularity}
\begin{df}\label{df-regular}
$u\in \Gen(\Omega)$ is $\widetilde\Gen^\infty$-microlocally regular at $(x_0,\xi_0)\in\widetilde \Omega_c\times \widetilde\Sphere$ if there exists $v\in\Gen_c(\Omega)$ such that
\[u(x)=v(x), \ \forall x\approx_\fast x_0 \qquad \text{and}\qquad \fourier v(\xi)= 0,\ \forall \xi \in \widetilde\R^d_{fs} \cap \Gamma_\fast(\xi_0).\]
\end{df}

\begin{rem}
$v\in\Gen_c(\Omega)\subseteq \Gen_\Schwartz(\R^d)$, hence its Fourier transform $\fourier v\in\Gen_\Schwartz(\R^d)$. If $v=[v_\eps]$, then $\fourier v=[\fourier{v_\eps}]$ with $(\fourier{v_\eps})_\eps\in\EMod_\Schwartz$ if $(v_\eps)_\eps\in\EMod_\Schwartz$ (here, we denote $\Gen_\Schwartz(\R^d):=\EMod_\Schwartz/\Null_\Schwartz$) \cite[Def.\ 2.10]{GGO05}.
\end{rem}

We can equivalently reformulate the conditions in Def.\ \ref{df-regular} (some of the reformulations resemble the classical definitions more closely). The proof relies on the overspill principle \cite{HVInternal} (however, the proof below is self-contained).
\begin{lemma}\label{equiv-local-equality}
For $u=[u_\eps]$, $v=[v_\eps]\in \Gen(\Omega)$ and $x_0=[x_{0,\eps}]\in\widetilde \Omega_c$, the following are equivalent:
\begin{enumerate}
\item there exists a slow scale nbd.\ $V$ (in $\widetilde\Omega_c$) of $x_0$ such that $u(x)=v(x)$, $\forall x\in V$
\item $u(x)=v(x)$, $\forall x\approx_\fast x_0$
\item for each $m\in\N$
\[
\sup_{\abs{x-x_{0,\eps}}\le\eps^{1/m}} \abs{u_\eps(x)-v_\eps(x)}\le\eps^m,\quad\text{for small $\eps$}.
\]
\end{enumerate}
\end{lemma}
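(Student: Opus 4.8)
The plan is to establish the cycle of implications $(1)\implies(2)\implies(3)\implies(1)$.

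For $(1)\implies(2)$ the argument is direct: slow scale radii dominate fast scale distances. Suppose $V\supseteq\{x\in\widetilde\Omega_c:\abs{x-x_0}\le r\}$ with $r\approx_\slow0$ ($r\in\GenR_{>0}$), and let $x\approx_\fast x_0$, so $\abs{x-x_0}\le\caninf^a$ for some $a\in\R_{>0}$. Since $1/\abs r$ is slow scale, $r\ge\caninf^a$, and at the level of representatives $\abs{x_\eps-x_{0,\eps}}\le\eps^a\le r_\eps$ for small $\eps$; hence $\abs{x-x_0}\le r$, so $x\in V$ and $u(x)=v(x)$.

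For $(2)\implies(3)$ I would argue by contraposition, which is exactly where the (self-contained) overspill phenomenon enters. If (3) fails there are $m\in\N$ and a sequence $\eps_n\downarrow0$ with $\sup_{\abs{x-x_{0,\eps_n}}\le\eps_n^{1/m}}\abs{u_{\eps_n}(x)-v_{\eps_n}(x)}>\eps_n^m$; pick $y_n$ in that ball with $\abs{u_{\eps_n}(y_n)-v_{\eps_n}(y_n)}>\eps_n^m$. Define $x_\eps:=x_{0,\eps}$ for $\eps\notin\{\eps_n\}$ and $x_{\eps_n}:=y_n$. Then $\abs{x_\eps-x_{0,\eps}}\le\eps^{1/m}$ for all $\eps$, so $x:=[x_\eps]$ satisfies $\abs{x-x_0}\le\caninf^{1/m}$, i.e.\ $x\approx_\fast x_0$; and $x\in\widetilde\Omega_c$ because each $y_n$ lies within $\eps_n^{1/m}\to0$ of the compact set carrying $x_0$, so $x_\eps$ stays in a compact subset of $\Omega$. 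But the net $u_\eps(x_\eps)-v_\eps(x_\eps)$ exceeds $\eps_n^m$ along $\eps_n$, hence is not negligible, so $u(x)\neq v(x)$, contradicting (2).

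For $(3)\implies(1)$ the delicate point is to manufacture a genuine slow scale radius out of the admissible radii $\eps^{1/m}$, and this is the \emph{main obstacle}. From (3) choose thresholds $\delta_m\downarrow0$ with $\sup_{\abs{x-x_{0,\eps}}\le\eps^{1/m}}\abs{u_\eps(x)-v_\eps(x)}\le\eps^m$ for $\eps\le\delta_m$, then pass to a strictly decreasing $\eps_m\to0$ with $\eps_m\le\delta_m$ and, crucially, $\eps_m^{1/m}\to0$ (e.g.\ by imposing $\eps_m\le e^{-m^2}$). Set $r_\eps:=\eps^{1/m}$ for $\eps\in(\eps_{m+1},\eps_m]$. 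One must check $r:=[r_\eps]\approx_\slow0$: the bound $r_\eps\le\eps_m^{1/m}\to0$ gives $r\approx0$, while for each $a\in\R_{>0}$, choosing $M$ with $1/M<a$ forces $r_\eps=\eps^{1/m}\ge\eps^a$ whenever $\eps\le\eps_M$ (since then $1/m<a$ and $\eps<1$), so $1/\abs r$ is slow scale. Finally, if $\abs{x-x_0}\le r$ then for $\eps\in(\eps_{m+1},\eps_m]$ we have $\abs{x_\eps-x_{0,\eps}}\le\eps^{1/m}$, whence $\abs{u_\eps(x_\eps)-v_\eps(x_\eps)}\le\eps^m$; given $p\in\N$, for $\eps\le\eps_p$ the running index satisfies $m\ge p$, so this is $\le\eps^p$. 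Thus $u(x)=v(x)$ on the slow scale neighbourhood $V:=\{x\in\widetilde\Omega_c:\abs{x-x_0}\le r\}$, which is (1).
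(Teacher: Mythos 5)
Your proof is correct and follows essentially the same route as the paper's: the same cycle $(1)\implies(2)\implies(3)\implies(1)$, the same contraposition-and-net construction for $(2)\implies(3)$, and the same diagonal choice $m_\eps:=m$ on $(\eps_{m+1},\eps_m]$ with $r:=[\eps^{1/m_\eps}]$ for $(3)\implies(1)$. Your only addition is making explicit the shrinking $\eps_m\le e^{-m^2}$ that guarantees $r_\eps\to 0$ (so that $r\approx_\slow 0$ and not merely $1/r$ slow scale), a detail the paper subsumes under its ``w.l.o.g.'' on $(\eps_m)_m$.
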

\begin{proof}
$(1)\implies(2)$: clear.

$(2)\implies (3)$: assuming that (3) does not hold, we find $m\in\N$ and a decreasing sequence $(\eps_n)_n$ tending to $0$ such that for each $n$ there exists $x_{\eps_n}\in\R^d$ with $\abs{x_{\eps_n}-x_{0,\eps_n}}\le\eps_n^{1/m}$ and $\abs{u_{\eps_n}(x)-v_{\eps_n}(x)}>\eps_n^m$. We can extend these $x_{\eps_n}$ to a net $(x_\eps)_\eps$ such that, with $x:=[x_\eps]$, $\abs{x-x_0}\le\caninf^{1/m}$, but $u(x)\ne v(x)$.

$(3)\implies(1)$: for any $m\in\N$, let the condition in (4) hold for $\eps\le\eps_m$. W.l.o.g., $(\eps_m)_m$ decreasingly tends to $0$. Let $m_\eps:= m$ for each $\eps\in (\eps_{m+1},\eps_m]$. Then $m_\eps\to+\infty$ als $\eps\to 0$, and for small $\eps$, we have that $\abs{u_\eps(x_\eps)-v_\eps(x_\eps)}\le\eps^{m_\eps}$ as soon as $\abs{x_\eps-x_{0,\eps}}\le\eps^{1/m_\eps}$. Hence $u(x)=v(x)$ for each $x\in V:=\{x\in\GenR^d: \abs{x-x_0}\le r\}$, with $r:=[\eps^{1/m_\eps}]\approx_\slow 0$.
\end{proof}

\begin{lemma}\label{equiv-zero-in-cone}
For $v=[v_\eps]\in \Gen_c(\Omega)$ and $\xi_0=[\xi_{0,\eps}]\in\widetilde\Sphere$, the following are equivalent:
\begin{enumerate}
\item there exists a conic slow scale nbd.\ $\Gamma$ (in $\GenR^d$) of $\xi_0$ and $r\in\GenR_{ss}$ s.t.\ $\fourier{v}(\xi)=0$, $\forall\xi\in \Gamma$ with $\abs{\xi}\ge r$
\item $\fourier v(\xi)= 0,\ \forall \xi \in \widetilde\R^d_{fs}\cap\Gamma_\fast(\xi_0)$
\item (assuming $(v_\eps)_\eps\in \EMod_{\Schwartz}$) for each $m\in\N$,
\[
\sup_{\abs{\frac\xi{\abs\xi}-\xi_{0,\eps}}\le\eps^{1/m},\, \abs\xi\ge\eps^{-1/m}}\abs{\fourier v_\eps(\xi)}\le\eps^m,\quad\text{for small $\eps$}
\]
\item (assuming $(v_\eps)_\eps\in \EMod_{\Schwartz}$) there exists $N\in\N$ such that for each $m\in\N$, 
\[\sup_{\abs{\frac\xi{\abs\xi}-\xi_{0,\eps}}\le\eps^{1/m}}\langle\xi\rangle^{m}\abs{\fourier v_\eps(\xi)}\le\eps^{-N},\quad\text{for small $\eps$}.
\]
\end{enumerate}
\end{lemma}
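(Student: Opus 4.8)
The plan is to close the loop $(1)\Rightarrow(2)\Rightarrow(3)\Rightarrow(1)$ among the first three (representative-free / pointwise) conditions, and then to prove $(3)\Leftrightarrow(4)$ separately; since every $v\in\Gen_c(\Omega)$ has a representative in $\EMod_\Schwartz$ and two such representatives differ by a $\Null_\Schwartz$-net (hence by a net decaying faster than every $\eps^m$ in each Schwartz seminorm), conditions (3) and (4) are independent of the chosen $\EMod_\Schwartz$-representative, so the four statements are genuinely equivalent. The implication $(1)\Rightarrow(2)$ I would obtain by a direct inclusion: if $\xi\in\widetilde\R^d_{fs}\cap\Gamma_\fast(\xi_0)$ then $\abs\xi\ge\caninf^{-a}$ for some $a\in\R_{>0}$, and since $r\in\GenR_{ss}$ satisfies $r\le\caninf^{-a}$ we get $\abs\xi\ge r$; moreover $\abs[\big]{\tfrac{\xi}{\abs\xi}-\xi_0}\le\caninf^{b}$ for some $b\in\R_{>0}$, while the slow-scale radius $r_1\approx_\slow 0$ of $\Gamma$ satisfies $r_1\ge\caninf^{b}$, so $\xi\in\Gamma$; thus $\fourier v(\xi)=0$ by (1).

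For $(2)\Rightarrow(3)$ I would argue by contraposition exactly as in Lemma~\ref{equiv-local-equality}. If (3) fails for some $m\in\N$, I extract $\eps_n\downarrow 0$ and $\xi_{\eps_n}$ with $\abs[\big]{\tfrac{\xi_{\eps_n}}{\abs{\xi_{\eps_n}}}-\xi_{0,\eps_n}}\le\eps_n^{1/m}$, $\abs{\xi_{\eps_n}}\ge\eps_n^{-1/m}$ and $\abs{\fourier v_{\eps_n}(\xi_{\eps_n})}>\eps_n^{m}$. The order-$1$ Schwartz bound $\langle\xi\rangle\abs{\fourier v_\eps(\xi)}\le\eps^{-N_1}$ forces $\abs{\xi_{\eps_n}}\le\eps_n^{-(N_1+m)}$, so extending the sequence to a moderate net (e.g.\ $\xi_\eps:=\eps^{-1/m}\xi_{0,\eps}/\abs{\xi_{0,\eps}}$ off $\{\eps_n\}$) produces $\xi=[\xi_\eps]\in\widetilde\R^d_{fs}\cap\Gamma_\fast(\xi_0)$; since $\abs{\fourier v_{\eps_n}(\xi_{\eps_n})}>\eps_n^m$ along the sequence, $\fourier v(\xi)=[\fourier v_\eps(\xi_\eps)]\neq0$ in $\GenC$, contradicting (2).

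The implication $(3)\Rightarrow(1)$ mirrors the diagonalisation used for $(3)\Rightarrow(1)$ in Lemma~\ref{equiv-local-equality}: letting the estimate in (3) hold for $\eps\le\eps_m$ and shrinking the thresholds so that $\log(1/\eps_m)\ge m^2$, I set $m_\eps:=m$ on $(\eps_{m+1},\eps_m]$; then $m_\eps\to+\infty$ slowly enough that $r_1:=[\eps^{1/m_\eps}]\approx_\slow0$ and $r:=[\eps^{-1/m_\eps}]\in\GenR_{ss}$. Taking $\Gamma:=\{\xi:\abs[\big]{\tfrac{\xi}{\abs\xi}-\xi_0}\le r_1\}$, for any $\xi\in\Gamma$ with $\abs\xi\ge r$ the estimate in (3) at index $m_\eps$ gives $\abs{\fourier v_\eps(\xi_\eps)}\le\eps^{m_\eps}$, which is rapidly vanishing, so $\fourier v(\xi)=0$ and (1) holds. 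The converse half of the last equivalence, $(4)\Rightarrow(3)$, is immediate: applying (4) at the single index $m^2+mN$ and using $\langle\xi\rangle^{-(m^2+mN)}\le\eps^{m+N}$ on $\abs\xi\ge\eps^{-1/m}$ yields $\abs{\fourier v_\eps(\xi)}\le\eps^{m}$ on the cone of (3).

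The main obstacle is $(3)\Rightarrow(4)$, namely producing one exponent $N$ that serves all weights $m$ at once: Schwartz-moderateness only supplies a weight-dependent order $N_k$ with $\langle\xi\rangle^{k}\abs{\fourier v_\eps(\xi)}\le\eps^{-N_k}$, and $N_k$ may grow with $k$, so the naive bound on $\langle\xi\rangle^m\abs{\fourier v_\eps}$ has order $\sim N_m\to\infty$. I would defeat this by splitting the cone $\abs[\big]{\tfrac{\xi}{\abs\xi}-\xi_{0,\eps}}\le\eps^{1/m}$ into three ranges. On the near field $\abs\xi<\eps^{-1/m}$ the weight is controlled, $\langle\xi\rangle^m\le C\eps^{-1}$, and the order-$0$ bound gives $\langle\xi\rangle^m\abs{\fourier v_\eps}\le\eps^{-N_0-2}$. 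On the very far field $\abs\xi\ge\eps^{-N_{2m}/m}$ the order-$2m$ bound gives $\langle\xi\rangle^m\abs{\fourier v_\eps}\le\langle\xi\rangle^{-m}\eps^{-N_{2m}}\le1$. On the intermediate range $\eps^{-1/m}\le\abs\xi<\eps^{-N_{2m}/m}$, where $\langle\xi\rangle^m\le C\eps^{-N_{2m}}$, I apply (3) at the fixed index $m^\ast:=\max(m,N_{2m})$ to get $\abs{\fourier v_\eps}\le\eps^{m^\ast}\le\eps^{N_{2m}}$, so the product is $O(1)$. Choosing $N:=N_0+2$ then delivers (4) for every $m$ (with the $\eps$-threshold depending on $m$, as the statement permits). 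The delicate point is the balancing of the far-field cut $\abs\xi=\eps^{-N_{2m}/m}$ against the decay index $m^\ast$, which is what converts the $m$-dependent Schwartz orders into a single uniform $N$.
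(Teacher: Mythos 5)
Your proposal is correct, and on the cycle $(1)\Rightarrow(2)\Rightarrow(3)\Rightarrow(1)$ it coincides with the paper's proof, which simply invokes the template of Lemma \ref{equiv-local-equality}: the same direct inclusion for $(1)\Rightarrow(2)$, the same contradiction-plus-net-extension for $(2)\Rightarrow(3)$, and the same diagonalization for $(3)\Rightarrow(1)$ (your extra shrinking $\log(1/\eps_m)\ge m^2$, which guarantees $\eps_m^{1/m}\to 0$ so that $[\eps^{1/m_\eps}]$ really is $\approx_\slow 0$, is a detail the paper leaves implicit). Your handling of moderateness of the extracted net via the order-one bound $\langle\xi\rangle\abs[\big]{\fourier{v_\eps}(\xi)}\le\eps^{-N_1}$, forcing $\abs{\xi_{\eps_n}}\le\eps_n^{-(N_1+m)}$, is exactly the role played by the paper's remark that $(\fourier{v_\eps})_\eps\in\EMod_\Schwartz$ yields, for each $n,m$, some $N$ with $\sup_{\abs\xi\ge\eps^{-N}}\langle\xi\rangle^m\abs[\big]{\fourier{v_\eps}(\xi)}\le\eps^n$. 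Where you genuinely diverge is in how (4) is attached: the paper proves $(2)\Rightarrow(4)$ by reusing the same contradiction template (a $\xi$ violating the weighted bound is extended to a moderate net representing a point of $\GenR^d_{fs}\cap\Gamma_\fast(\xi_0)$ at which $\fourier v\ne 0$) and dismisses $(4)\Rightarrow(3)$ as straightforward, whereas you prove the harder quantitative implication $(3)\Rightarrow(4)$ directly by the three-range splitting. I checked that splitting and it is sound: the key inclusion works because $m^\ast=\max(m,N_{2m})\ge m$ gives $\eps^{1/m}\le\eps^{1/m^\ast}$ and $\eps^{-1/m}\ge\eps^{-1/m^\ast}$, so the cone of (4) at index $m$ intersected with $\{\abs\xi\ge\eps^{-1/m}\}$ lies inside the domain of validity of (3) at index $m^\ast$; the near field costs $\langle\xi\rangle^m\le 2^{m/2}\eps^{-1}$ against the order-zero bound $\eps^{-N_0}$, the far field $\abs\xi\ge\eps^{-N_{2m}/m}$ is killed by the order-$2m$ Schwartz bound (take w.l.o.g.\ $N_{2m}\ge 1$ so the ranges fit together), and the $m$-dependent constants are absorbed since the $\eps$-threshold in (4) may depend on $m$. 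What each route buys: the paper's is shorter, recycling one overspill argument three times; yours decouples $(3)\Leftrightarrow(4)$ from the generalized-point machinery entirely, giving a purely representative-level equivalence that moreover identifies the uniform exponent in (4) concretely as $N=N_0+2$, i.e., essentially the order-zero sup-norm moderateness exponent of $(\fourier{v_\eps})_\eps$ --- information the paper's indirect argument does not display. Your preliminary observation that (3) and (4) are independent of the chosen $\EMod_\Schwartz$-representative (two representatives differing by a $\Null_\Schwartz$-net) is also a point the paper tacitly assumes but does not spell out.
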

\begin{proof}
$(1)\implies(2)\implies(3)\implies(1)$, $(2)\implies(4)$: similar to Lemma \ref{equiv-local-equality}. Since $(\fourier{v_\eps})_\eps\in\EMod_\Schwartz$, we find for each $n,m\in\N$ some $N\in\N$ such that
\[\sup_{\abs\xi\ge \eps^{-N}}\langle\xi\rangle^m\abs{\fourier {v_\eps}(\xi)}\le\eps^n.\] This ensures that the nets $(\xi_\eps)_\eps$ constructed in the $(2)\Rightarrow(3)$ and $(2)\Rightarrow(4)$ parts of the proof are moderate.

$(4)\implies (3)$: straightforward (cf.\ also \cite[Thm.~3.12]{HVPointwise}).
\end{proof}

We denote $\Gen_{ss}(\GenR^d)= \{u\in\Gen_\Schwartz(\R^d): u(x)=0$ for each $x\in \GenR^d_{fs}\}$.

\begin{prop}\label{cut-off}
Let $v\in\Gen_c(\Omega)$, $\phi\in\Gen^\infty(\Omega)$ and $\xi_0\in\widetilde\Sphere$. Let $\fourier v(\xi)=0$ for each $\xi\in\GenR^d_{fs}\cap\Gamma_\fast(\xi_0)$. Then also $\fourier{\phi v}(\xi)=0$ for each $\xi\in\GenR^d_{fs}\cap\Gamma_\fast(\xi_0)$.
\end{prop}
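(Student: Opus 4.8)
The plan is to reduce the claim to the representative-level decay estimate in Lemma~\ref{equiv-zero-in-cone}(4) and then run the classical ``convolution across a cone'' argument, the whole point being to arrange that every power of $\eps$ that appears stays bounded uniformly in the decay order $m$. First I would make $\phi$ compactly supported: since $v\in\Gen_c(\Omega)$, fix $K\Subset\Omega$ and a representative with $\operatorname{supp}v_\eps\subseteq K$, and choose $\psi\in\test(\Omega)$ with $\psi\equiv1$ on a neighbourhood of $K$, so that $\psi v=v$ and hence $\phi v=(\psi\phi)v$ with $f:=\psi\phi\in\Gen^\infty_c(\Omega)$. Thus it suffices to treat $fv$ in place of $\phi v$, and at the level of representatives $\fourier{(fv)}_\eps=c_d\,\fourier{f_\eps}\ast\fourier{v_\eps}$ with $f_\eps,v_\eps\in\test(\R^d)$. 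I record three estimates. From $f\in\Gen^\infty_c$ one obtains a single $N_0$ such that for every $k$ there is a constant $C_k$ with $\abs{\fourier{f_\eps}(\eta)}\le C_k\eps^{-N_0}\langle\eta\rangle^{-k}$ for small $\eps$ (integrate $\partial^\alpha f_\eps$; the crucial point is that $N_0$ is independent of $k$). From $v\in\Gen_c\subseteq\Gen_\Schwartz$ one has the moderate bounds $\sup_\zeta\abs{\fourier{v_\eps}(\zeta)}\le\eps^{-M_0}$ and $\sup_\zeta\abs{\fourier{(fv)}_\eps(\zeta)}\le\eps^{-M_0'}$. Finally, the hypothesis $\fourier v(\xi)=0$ on $\GenR^d_{fs}\cap\Gamma_\fast(\xi_0)$ is, by Lemma~\ref{equiv-zero-in-cone}, equivalent to its condition (4): there is a single $N_1$ such that for every $m'$,
\[\sup_{\abs{\frac\xi{\abs\xi}-\xi_{0,\eps}}\le\eps^{1/m'}}\langle\xi\rangle^{m'}\abs{\fourier{v_\eps}(\xi)}\le\eps^{-N_1},\quad\text{for small }\eps.\]
I will establish the same condition (4) for $\fourier{(fv)}$, whence Lemma~\ref{equiv-zero-in-cone} gives the conclusion.

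Then I would fix $m$ and estimate $\langle\xi\rangle^m\abs{\fourier{(fv)}_\eps(\xi)}$ for $\xi$ in the cone $\abs{\frac\xi{\abs\xi}-\xi_{0,\eps}}\le\eps^{1/m}$. If $\abs\xi\le\eps^{-1/m}$ then $\langle\xi\rangle^m\le2^{m/2}\eps^{-1}$ for small $\eps$, and the uniform bound on $\fourier{(fv)}_\eps$ settles this range. For $\abs\xi>\eps^{-1/m}$ I split $\int\fourier{f_\eps}(\eta)\fourier{v_\eps}(\xi-\eta)\,d\eta$ into three regions. On $\abs\eta\le\eps^{1/m}\abs\xi$ the elementary direction estimate $\abs[\big]{\frac{\xi-\eta}{\abs{\xi-\eta}}-\frac\xi{\abs\xi}}\le\frac{2\abs\eta}{\abs\xi}$ shows that $\xi-\eta$ lies in the cone of exponent $m+1$ (for small $\eps$) with $\abs{\xi-\eta}\ge\abs\xi/2$; applying the regularity bound with decay order $m+1$ gives $\abs{\fourier{v_\eps}(\xi-\eta)}\le\eps^{-N_1}\langle\xi-\eta\rangle^{-(m+1)}\le\eps^{-N_1}(\abs\xi/2)^{-(m+1)}$, and pulling out $\norm{\fourier{f_\eps}}_{L^1}\le C\eps^{-N_0}$ this region contributes $\le C_m\eps^{-N_0-N_1}$ after multiplication by $\langle\xi\rangle^m$. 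On $\abs\eta>\abs\xi/2$ and on $\eps^{1/m}\abs\xi<\abs\eta\le\abs\xi/2$ I instead use the rapid decay of $\fourier{f_\eps}$ with order $k=m+d$ together with the uniform bound on $\fourier{v_\eps}$; the tail integrals are controlled by $(\abs\xi/2)^{-m}$ and $(\eps^{1/m}\abs\xi)^{-m}$ respectively, the latter producing exactly one extra factor $\eps^{-1}$ since $(k-d)/m=1$. Using $\langle\xi\rangle^m\le2^{m/2}\abs\xi^m$, these two regions contribute $\le C_m\eps^{-N_0-M_0}$ and $\le C_m\eps^{-N_0-M_0-1}$.

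Collecting the three regions and absorbing the $m$-dependent constants $C_m$ into one extra fixed power of $\eps$ (legitimate, as the conclusion is only required for small $\eps$, possibly depending on $m$), I obtain condition (4) for $\fourier{(fv)}$ with a single exponent $N:=\max(N_0+N_1,\,N_0+M_0,\,M_0')+2$, independent of $m$. By Lemma~\ref{equiv-zero-in-cone} this yields $\fourier{(fv)}(\xi)=0$ for all $\xi\in\GenR^d_{fs}\cap\Gamma_\fast(\xi_0)$, i.e.\ the assertion. I expect the main obstacle to be precisely the bookkeeping in the middle region: one must choose the inner threshold $\eps^{1/m}\abs\xi$ (dictated by the width of the regularity cone) and the decay order $k=m+d$ (dictated by the need to beat $\langle\xi\rangle^m$) so that the surplus power $\eps^{-(k-d)/m}$ coming from the threshold stays bounded as $m\to\infty$; the balance $(k-d)/m=1$ is exactly what keeps the final exponent $N$ uniform in $m$.
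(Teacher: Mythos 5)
Your proof is correct, but it takes a genuinely different route from the paper's. After the same initial reduction to $\phi\in\Gen^\infty_c(\Omega)$, the paper argues softly at the level of generalized points: it fixes $\xi\in\GenR^d_{fs}\cap\Gamma_\fast(\xi_0)$, writes $\fourier{\phi v}(\xi)=\int\fourier\phi(\eta)\fourier v(\xi-\eta)\,d\eta$, and shows the integrand is the zero element of $\Gen_\Schwartz$ pointwise: $\fourier\phi$ vanishes at all fast scale points because $\fourier{\Gen^\infty_\Schwartz}\subseteq\Gen_{ss}(\GenR^d)$ by the structural identity $\Gen_{ss}(\GenR^d)\cap\fourier{\Gen_{ss}}(\GenR^d)=\Gen^\infty_\Schwartz(\GenR^d)$ \cite[Thm.~3.13]{HVPointwise}, while for slow scale $\eta$ the point $\xi-\eta$ remains in $\GenR^d_{fs}\cap\Gamma_\fast(\xi_0)$ (since $\abs\eta/\abs\xi\approx_\fast 0$), so $\fourier v(\xi-\eta)=0$; the pointwise characterization of zero in $\Gen_\Schwartz$ \cite[Lemma~4.1]{HVPointwise} then annihilates the integral, with no estimates at all. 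You instead translate hypothesis and conclusion through the equivalence of conditions (2) and (4) in Lemma~\ref{equiv-zero-in-cone} and run a classical H\"ormander-style convolution splitting at the representative level --- essentially the computation the paper itself performs later in Lemma~\ref{lemma-projection-of-wave-front-2}, where your threshold $\eps^{1/m}\abs\xi$ and your direction estimate correspond to the set $A$ and Lemma~\ref{lemma-cone} there. Your bookkeeping is sound: the inner region is handled by condition (4) at order $m+1$ (using $3\eps^{1/m}\le\eps^{1/(m+1)}$ for small $\eps$, which is legitimate since the threshold in $\eps$ may depend on $m$), the outer regions by the decay $\abs[\big]{\fourier{f_\eps}(\eta)}\le C_k\eps^{-N_0}\langle\eta\rangle^{-k}$ with $N_0$ independent of $k$ --- the correct representative-level expression of $f\in\Gen^\infty_c$ --- and the choice $k=m+d$ indeed caps the loss from the threshold at a single factor $\eps^{-1}$, so your final exponent $N$ is uniform in $m$, which is exactly what condition (4) requires. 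What each approach buys: the paper's proof is shorter and avoids all quantitative work, but leans on two nontrivial external results from \cite{HVPointwise}; yours is self-contained modulo Lemma~\ref{equiv-zero-in-cone} and yields an explicit exponent, at the price of the exponent bookkeeping, whose one delicate balance (the relation $(k-d)/m=1$) you identified and handled correctly.
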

\begin{proof}
W.l.o.g., $\phi\in\Gen^\infty_c(\Omega)$.
Fix $\xi\in \GenR^d_{fs}\cap\Gamma_\fast(\xi_0)$. Then
\[\fourier{\phi v}(\xi) = \int\fourier\phi(\eta)\fourier v(\xi-\eta)\,d\eta.\]
Since $\Gen_{ss}(\GenR^d)\cap \fourier{\Gen_{ss}}(\GenR^d) = \Gen^\infty_\Schwartz(\GenR^d)$ \cite[Thm.~3.13]{HVPointwise}, we have $\fourier\phi\in\fourier{\Gen_c^\infty(\Omega)}\subseteq \fourier{\Gen^\infty_\Schwartz}\subseteq \Gen_{ss}(\GenR^d)$.

Now let $\eta\in\GenR^d_{ss}$. Then also $\xi-\eta\in\GenR^d_{fs}$. Since $\frac{\abs\eta}{\abs\xi}\approx_\fast 0$,
\[
\frac{\xi-\eta}{\abs{\xi-\eta}}\approx_\fast \frac{\xi-\eta}{\abs\xi}\approx_\fast \frac{\xi}{\abs\xi}\approx_\fast \xi_0.
\]
Hence $\fourier v(\xi-\eta)=0$.

Therefore, $\eta\mapsto \fourier\phi(\eta)\fourier v(\xi-\eta)=0$ in $\Gen_\Schwartz$ \cite[Lemma~4.1]{HVPointwise}. Thus $\fourier{\phi v}(\xi)=0$.
\end{proof}

\begin{cor}\label{regularity-via-cut-off}
Let $\phi\in\Gen^\infty(\Omega)$. If $u\in\Gen(\Omega)$ is $\widetilde\Gen^\infty$-microlocally regular at $(x_0,\xi_0)$, then also $\phi u$ is $\widetilde\Gen^\infty$-microlocally regular at $(x_0,\xi_0)$.
\end{cor}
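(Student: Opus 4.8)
The plan is to produce a witness for the $\widetilde\Gen^\infty$-microlocal regularity of $\phi u$ directly from a witness for $u$. By Definition \ref{df-regular}, the hypothesis gives $v\in\Gen_c(\Omega)$ with $u(x)=v(x)$ for all $x\approx_\fast x_0$ and $\fourier v(\xi)=0$ for all $\xi\in\GenR^d_{fs}\cap\Gamma_\fast(\xi_0)$. I would simply take $w:=\phi v$ as the candidate witness for $\phi u$ and check the two conditions of Definition \ref{df-regular}.

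First I would note that $w\in\Gen_c(\Omega)$: the support of $\phi v$ is contained in that of $v$, which is compact, so multiplying the $\Gen^\infty$-function $\phi$ by the compactly supported $v$ keeps us in $\Gen_c(\Omega)$, where the Fourier transform is available. For the local equality condition, I would evaluate at any $x\approx_\fast x_0$ and use that pointwise evaluation on generalized points is multiplicative, so $(\phi u)(x)=\phi(x)\,u(x)=\phi(x)\,v(x)=w(x)$, the middle equality being exactly $u(x)=v(x)$ on the fast monad of $x_0$. This gives the first condition of Definition \ref{df-regular} for $\phi u$ with witness $w$.

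For the Fourier condition, the point is that $\fourier w(\xi)=\fourier{\phi v}(\xi)=0$ for all $\xi\in\GenR^d_{fs}\cap\Gamma_\fast(\xi_0)$ is precisely the conclusion of Proposition \ref{cut-off}, whose hypothesis is exactly the vanishing of $\fourier v$ on $\GenR^d_{fs}\cap\Gamma_\fast(\xi_0)$ supplied by the regularity of $u$. Both conditions of Definition \ref{df-regular} then hold for $\phi u$, so the corollary follows. I do not expect any real obstacle here: the only substantial content—the behaviour of the Fourier transform of the cut-off product inside the fast-scale cone $\Gamma_\fast(\xi_0)$—has already been dispatched in Proposition \ref{cut-off}, and the corollary amounts to combining that result with the triviality that multiplication by $\phi$ preserves equality with $u$ on $\monad_\fast(x_0)$.
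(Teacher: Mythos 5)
Your proof is correct and matches the paper's (implicit) argument: the corollary is stated without proof precisely because it follows immediately from Proposition \ref{cut-off} combined with Definition \ref{df-regular}, using the witness $w=\phi v$ exactly as you do. Your verification of the two conditions (multiplicativity of evaluation at generalized points for the local equality on $\monad_\fast(x_0)$, and Proposition \ref{cut-off} for the vanishing of $\fourier{\phi v}$ on $\GenR^d_{fs}\cap\Gamma_\fast(\xi_0)$) is exactly the intended reasoning.
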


Let $B(a,r):=\{x\in \R^d: \abs{x-a}<r\}$. We use the following notation. Let a net $(x_{0,\eps})_\eps$ be given. We fix $\phi_0\in\test(B(0,1))$ with $0\le\phi_0\le 1$ and with $\phi_0(x)=1$ for each $x\in B(0,1/2)$. For $m\in\N$ and $\eps>0$, we denote
\[\phi_{m,\eps}(x):= \phi_0\Bigl(\frac{x-x_{0,\eps}}{\eps^{1/m}}\Bigr).\]

\begin{cor}\label{def-regular-with-cutoff}
For $u\in \Gen(\Omega)$ and $(x_0,\xi_0)\in\widetilde \Omega_c\times \widetilde\Sphere$, the following are equivalent:
\begin{enumerate}
\item $u$ is $\widetilde\Gen^\infty$-microlocally regular at $(x_0,\xi_0)$
\item there exists $\phi\in\Gen_c^\infty(\Omega)$ such that
\[\phi(x)=1, \ \forall x\approx_\fast x_0 \qquad \text{and}\qquad \fourier{\phi u}(\xi)= 0,\ \forall \xi \in \widetilde\R^d_{fs}\cap\Gamma_\fast(\xi_0)\]
\item there exists $\phi\in\Gen_c^\infty(\Omega)$ and $R\in\GenR_{ss}$ such that
\[\begin{cases}
\abs{\partial^\alpha\phi(x)}\le R\\
\abs{\phi(x)}\ge \frac1R,
\end{cases}
\!\forall x\approx_\fast x_0, \ \forall \alpha\in\N^d \text{ and }\ \fourier{\phi u}(\xi)= 0,\ \forall \xi \in \widetilde\R^d_{fs}\cap\Gamma_\fast(\xi_0).\]
\end{enumerate}
\end{cor}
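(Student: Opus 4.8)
The plan is to prove the cyclic chain $(2)\Rightarrow(1)\Rightarrow(3)\Rightarrow(2)$, so that essentially all the work sits in one implication. The step $(2)\Rightarrow(1)$ is immediate: given $\phi$ as in (2), I set $v:=\phi u\in\Gen_c(\Omega)$; since $\phi(x)=1$ for $x\approx_\fast x_0$ we get $v(x)=\phi(x)u(x)=u(x)$ there, while $\fourier v=\fourier{\phi u}=0$ on $\widetilde\R^d_{fs}\cap\Gamma_\fast(\xi_0)$, which is exactly Definition \ref{df-regular}.

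For $(1)\Rightarrow(3)$ I will exhibit an explicit cutoff. Starting from $v\in\Gen_c(\Omega)$ as in Definition \ref{df-regular}, Lemma \ref{equiv-local-equality} ($(2)\Rightarrow(3)\Rightarrow(1)$) yields a sequence $m_\eps\to+\infty$ with $r:=[\eps^{1/m_\eps}]\approx_\slow 0$ and $\sup_{\abs{x-x_{0,\eps}}\le\eps^{1/m_\eps}}\abs{u_\eps(x)-v_\eps(x)}\le\eps^{m_\eps}$. I take $\phi:=[\phi_{m_\eps,\eps}]$ (the scaled bump from the notation preceding the corollary, with width $\eps^{1/m_\eps}$), so $\phi\in\Gen^\infty_c(\Omega)$. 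Since $\phi_\eps$ is supported in $\bar B(x_{0,\eps},\eps^{1/m_\eps})$, where $u_\eps-v_\eps$ is negligible, $\phi u=\phi v$ in $\Gen(\Omega)$, and Proposition \ref{cut-off} gives $\fourier{\phi u}=\fourier{\phi v}=0$ on $\widetilde\R^d_{fs}\cap\Gamma_\fast(\xi_0)$. The point of a slow-scale width is that $\phi_\eps\equiv 1$ on $B(x_{0,\eps},\eps^{1/m_\eps}/2)$, which eventually contains every fast-scale ball $B(x_{0,\eps},\eps^{a})$; hence $\phi(x)=1$ and $\partial^\alpha\phi(x)=0$ for $\alpha\ne 0$ whenever $x\approx_\fast x_0$, so the estimates in (3) hold trivially with $R:=1$.

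The main obstacle is $(3)\Rightarrow(2)$, where the elliptic cutoff $\phi$ must be inverted, and the difficulty is that the bounds in (3) are only assumed at generalized fast-scale points. First I will upgrade them to bounds on a single slow-scale ball. For each fixed $\alpha$ and each $m$, an overspill argument (as in Lemma \ref{equiv-local-equality}, $(2)\Rightarrow(3)$) turns ``$\abs{\partial^\alpha\phi(x)}\le R$ and $\abs{\phi(x)}\ge 1/R$ for all $x\approx_\fast x_0$'' into $\sup_{\abs{x-x_{0,\eps}}\le\eps^{1/m}}\abs{\partial^\alpha\phi_\eps}\le 2R_\eps$ and $\inf_{\abs{x-x_{0,\eps}}\le\eps^{1/m}}\abs{\phi_\eps}\ge\tfrac1{2R_\eps}$ for small $\eps$. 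Diagonalizing over $(\alpha,m)$ exactly as in Lemma \ref{equiv-local-equality} ($(3)\Rightarrow(1)$), I obtain one slow-scale radius $s=[\eps^{1/k_\eps}]\approx_\slow 0$ on whose ball $B(x_{0,\eps},s_\eps)$ all these bounds hold (for each fixed $\alpha$, for small $\eps$). Because $R\in\GenR_{ss}$, every fixed power of $R$ and of $1/\abs\phi$ is eventually $\le\eps^{-1}$, so a single $N=1$ controls all derivative orders at once (the per-order $\eps$-thresholds being allowed to depend on the order), giving $\phi,1/\phi\in\Gen^\infty$ on $B(x_0,s)$. This overspill-plus-diagonalization, exploiting that the same slow-scale $R$ serves all $\alpha$, is the delicate part that lets one slow-scale neighbourhood absorb every derivative order simultaneously.

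Finally I close the loop by normalizing. I choose a slow-scale cutoff $\eta=[\eta_\eps]$ with $\eta_\eps$ supported in $B(x_{0,\eps},s_\eps)$ and equal to $1$ on $B(x_{0,\eps},s_\eps/2)$; as in the previous step $\eta(x)=1$ for every $x\approx_\fast x_0$. Since $\abs\phi\ge\tfrac1{2R}$ on $\operatorname{supp}\eta$, the quotient $\psi:=\eta/\phi$ is a well-defined element of $\Gen^\infty_c(\Omega)$ with $\psi\phi=\eta$. Hence $\eta u=\psi(\phi u)$ with $\phi u\in\Gen_c(\Omega)$ and $\fourier{\phi u}=0$ on the cone, so Proposition \ref{cut-off} (applied to $\phi u$ and the multiplier $\psi$) yields $\fourier{\eta u}=\fourier{\psi(\phi u)}=0$ on $\widetilde\R^d_{fs}\cap\Gamma_\fast(\xi_0)$. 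Together with $\eta=1$ at fast-scale points this is precisely (2), completing the cycle.
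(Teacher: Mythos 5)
Your proposal is correct and follows essentially the same route as the paper: your $(1)\Rightarrow(3)$ is the paper's $(1)\Rightarrow(2)$ (build $\phi=[\phi_{m_\eps,\eps}]$ via Lemma \ref{equiv-local-equality} so that $\phi u=\phi v$ and invoke Proposition \ref{cut-off}), and your $(3)\Rightarrow(2)$ is the paper's $(3)\Rightarrow(1)$ (overspill to a slow-scale ball, divide the slow-scale bump by the elliptic cutoff $\phi$ using that powers of the slow-scale $R$ admit the uniform moderateness exponent, then apply Proposition \ref{cut-off} again); only the orientation of the cycle differs. The extra details you supply (diagonalization over $(\alpha,m)$, the $\Gen^\infty$-membership of $\eta/\phi$) are exactly what the paper's terser wording leaves implicit.
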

\begin{proof}
$(1)\Rightarrow(2)$: choose $v$ as in the definition of $\widetilde\Gen^\infty$-microlocal regularity. By characterization (1) in Lemma \ref{equiv-local-equality}, we can find $\phi\in\Gen_c^\infty(\Omega)$ with $\phi(x)=1$ for each $x\approx_\fast x_0$ and with $\phi u=\phi v$. By Proposition \ref{cut-off}, $\fourier{\phi u}(\xi)=\fourier{\phi v}(\xi)=0$ for each $\xi\in\GenR^d_{fs}\cap\Gamma_\fast(\xi_0)$.

$(2)\implies(3)$: trivial.

$(3)\implies(1)$: by an overspill argument, actually $\abs{\partial^\alpha\phi(x)}\le R$ and $\abs{\phi(x)}\ge 1/R$ hold on some slow scale neighbourhood $V=\{x\in \GenR^d: \abs x\le [\eps^{1/m_\eps}]\}$ (with $m_\eps\to+\infty$). Then $\psi:=[\phi_{m_\eps,\eps}]\in\Gen_c^\infty(\Omega)$ with $\psi(x)=1$ for each $x\approx_\fast x_0$ and $\frac\psi\phi\in\Gen^\infty_c(\Omega)$, whence $\fourier{\psi u}(\xi)=\fourier{\frac{\psi}{\phi}\phi u}(\xi)=0$ for each $\xi \in \widetilde\R^d_{fs}\cap\Gamma_\fast(\xi_0)$ by Proposition \ref{cut-off}. Then let $v:=\psi u\in \Gen_c(\Omega)$.
\end{proof}

\section{Consistency with $\widetilde\Gen^\infty$-regularity}
We now proceed to show that the projection of the wave front set in the first coordinate is the singular support (Theorem \ref{proj-of-WF-is-singular-support}).
\begin{lemma}\label{lemma-projection-of-wave-front-1}
Let $x_0=[x_{0,\eps}]\in\widetilde\Omega_c$ and $\Gamma\subseteq \R^d$ a (nongeneralized) cone. Let $u=[u_\eps]\in\Gen(\Omega)$ be $\widetilde\Gen^\infty$-microlocally regular at $(x_0,\xi_0)$, for each $\xi_0\in\widetilde\Sphere\cap\widetilde\Gamma$. Then for each $m\in\N$
\begin{equation}\label{eq-regular-for-each-xi-1}
(\exists \eps_m) (\forall \eps\le\eps_m) (\forall \xi_0\in\Sphere\cap \Gamma) (\exists k\in\N, k\ge 2m) \Bigl(\sup_{\abs{\frac\xi{\abs\xi}-\xi_0}\le\eps^{1/m},\, \abs\xi\ge\eps^{-1/m}}\abs[\big]{\fourier{\phi_{k,\eps} u_\eps}(\xi)}\le\eps^m\Bigr).
\end{equation}
\end{lemma}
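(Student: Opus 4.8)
The plan is to argue by contradiction, exploiting the fact that the hypothesis is imposed at \emph{every} generalized direction $\xi_0\in\widetilde\Sphere\cap\widetilde\Gamma$ (not merely at the nongeneralized ones) in order to manufacture the uniformity in $\eps$ and $\xi_0$ that the conclusion demands. Negating \eqref{eq-regular-for-each-xi-1} for a fixed $m$ yields a decreasing sequence $\eps_n\to 0$ and directions $\xi_{0,n}\in\Sphere\cap\Gamma$ such that, for \emph{every} $k\ge 2m$,
\[
\sup_{\abs{\frac\xi{\abs\xi}-\xi_{0,n}}\le\eps_n^{1/m},\, \abs\xi\ge\eps_n^{-1/m}}\abs[\big]{\fourier{\phi_{k,\eps_n}u_{\eps_n}}(\xi)}>\eps_n^m .
\]
First I would compress these witnesses into a single generalized direction: set $\xi_{0,\eps}^\ast:=\xi_{0,n}$ for $\eps\in(\eps_{n+1},\eps_n]$ and put $\xi_0^\ast:=[\xi_{0,\eps}^\ast]$. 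Since each $\xi_{0,n}$ lies in the compact set $\Sphere\cap\Gamma$ and $\Gamma$ is a (nongeneralized) cone, the net is moderate, $\abs{\xi_0^\ast}=1$, and $\xi_0^\ast\in\widetilde\Sphere\cap\widetilde\Gamma$.

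By hypothesis $u$ is then $\widetilde\Gen^\infty$-microlocally regular at $(x_0,\xi_0^\ast)$. By Corollary \ref{def-regular-with-cutoff} and the cutoff construction in its proof, I may take the cutoff in the standard form $\phi=[\phi_{m_\eps,\eps}]\in\Gen_c^\infty(\Omega)$ for some net $m_\eps\to+\infty$ of natural numbers, with $\phi(x)=1$ for $x\approx_\fast x_0$ and $\fourier{\phi u}(\xi)=0$ for all $\xi\in\widetilde\R^d_{fs}\cap\Gamma_\fast(\xi_0^\ast)$. Applying $(2)\Rightarrow(3)$ of Lemma \ref{equiv-zero-in-cone} to $v:=\phi u\in\Gen_c(\Omega)$ then gives, for the value $m$ above, some $\eta_m>0$ with
\[
\sup_{\abs{\frac\xi{\abs\xi}-\xi_{0,\eps}^\ast}\le\eps^{1/m},\, \abs\xi\ge\eps^{-1/m}}\abs[\big]{\fourier{\phi_{m_\eps,\eps}u_\eps}(\xi)}\le\eps^m,\quad\text{for }\eps\le\eta_m .
\]
The decisive point is to use the freedom in the quantifier $\exists k\ge 2m$ by letting $k$ follow the cutoff scale: for each $n$ with $\eps_n\le\eta_m$ and $m_{\eps_n}\ge 2m$, choose $k:=m_{\eps_n}$. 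Then $\phi_{k,\eps_n}=\phi_{m_{\eps_n},\eps_n}$ is exactly the representative of $\phi$ at $\eps_n$, and since $\xi_{0,\eps_n}^\ast=\xi_{0,n}$, the second display reads $\sup\abs[\big]{\fourier{\phi_{k,\eps_n}u_{\eps_n}}}\le\eps_n^m$ over the very cone in which the negation asserts the strict inequality $>\eps_n^m$ for all $k\ge 2m$. This contradiction proves the lemma.

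The step I expect to be genuinely delicate, and which dictates the whole structure, is securing uniformity in the direction $\xi_0$: for a single fixed nongeneralized $\xi_0$ the regularity hypothesis produces only an $\eps$-threshold and a cutoff depending on $\xi_0$, and there is no a priori reason these can be chosen uniformly over $\Sphere\cap\Gamma$. The contradiction format sidesteps a direct compactness/covering argument by collapsing the hypothetical bad directions at the scales $\eps_n$ into one generalized direction $\xi_0^\ast$, at which the hypothesis is available precisely because it was imposed on all of $\widetilde\Sphere\cap\widetilde\Gamma$. A second, latent difficulty — comparing the fixed-$k$ cutoff $\phi_{k,\eps}$, which is \emph{not} $\Gen^\infty$, with the $\Gen^\infty$ cutoff furnished by regularity, which would otherwise force a convolution estimate $\fourier{\phi_{k,\eps}u_\eps}=(2\pi)^{-d}\fourier{\phi_{k,\eps}}*\fourier{\phi_\eps u_\eps}$ with frequency splitting at a scale $\eps^{-\beta}$, $1/k<\beta<1/m$ (a range that is nonempty exactly because $k\ge 2m$) — is avoided entirely by the choice $k=m_{\eps_n}$, which makes the two cutoffs coincide at the relevant parameters.
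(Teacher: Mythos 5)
Your proof is correct and takes essentially the same route as the paper's: negate \eqref{eq-regular-for-each-xi-1} for a fixed $m$, glue the bad directions at the scales $\eps_n$ into a single generalized direction $\xi_0^\ast\in\widetilde\Sphere\cap\widetilde\Gamma$, apply the hypothesis there with a cutoff of the standard form $[\phi_{m_\eps,\eps}]$ (constructed as in Corollary~\ref{def-regular-with-cutoff}), and derive the contradiction by choosing $k=m_{\eps_n}$. The only, harmless, difference is one of packaging: you obtain the uniform decay estimate by citing Lemma~\ref{equiv-zero-in-cone}, $(2)\Rightarrow(3)$, whereas the paper redoes that extraction in place (producing a witness frequency net $(\xi_{\eps_n})$ and contradicting $\fourier{\phi u}(\xi)=0$), so the moderateness subtlety the paper notes parenthetically is, in your version, absorbed into the cited lemma.
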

\begin{proof}
Let $m\in\N$. If \eqref{eq-regular-for-each-xi-1} does not hold, then we find a decreasing sequence $(\eps_n)_n$ tending to $0$ and $\xi_{0,\eps_n}\in\Sphere\cap\Gamma$ such that for each $k\ge 2m$
\[\sup_{\abs{\frac\xi{\abs\xi}-\xi_{0,\eps_n}}\le\eps^{1/m},\, \abs\xi\ge\eps^{-1/m}}\abs[]{\fourier{\phi_{k,\eps_n} u_{\eps_n}}(\xi)}>\eps_n^m.\]
Extend $\xi_{0,\eps_n}$ to a net $(\xi_{0,\eps})_\eps$ representing $\xi_0\in\widetilde\Sphere\cap\widetilde\Gamma$. As $u$ is $\widetilde\Gen^\infty$-microlocally regular at $(x_0,\xi_0)$, there exists $v\in\Gen_c(\Omega)$ such that $u=v$ on a slow scale neighbourhood $V$ of $x_0$ and $\fourier{v}(\xi)=0$ for each $\xi\in\GenR^d_{fs}$ in a conic slow scale neighbourhood of $\xi_0$. As in Corollary \ref{def-regular-with-cutoff}, this also holds if we replace $v$ by $\phi u$, where $\phi= [\phi_{k_\eps,\eps}]$, provided that $k_\eps\to \infty$ as $\eps\to 0$ (ensuring that $\phi\in\Gen^\infty(\R^d)$ and $\phi=1$ on a slow scale neighbourhood of $x_0$) and provided that $\{x\in\GenR^d: \abs{x-x_0}\le [\eps^{1/k_\eps}]\}\subseteq V$ (ensuring that $\phi u = \phi v$). W.l.o.g., $k_\eps\ge 2 m$ for each $\eps$. 

Thus for each $n$, we find $\xi_{\eps_n}$ such that $\abs[\big]{\frac{\xi_{\eps_n}}{\abs{\xi_{\eps_n}}}-\xi_{0,\eps_n}}\le\eps_n^{1/m},\, \abs{\xi_{\eps_n}}\ge\eps_n^{-1/m}$ and $\abs[]{\fourier{\phi_{k_{\eps_n},\eps_n} u_{\eps_n}}(\xi_{\eps_n})}>\eps_n^m$. Extending $\xi_{\eps_n}$ to a net $(\xi_\eps)_\eps$ representing $\xi\in\GenR^d_{fs}$ with $\abs[\big]{\frac\xi{\abs\xi}-\xi_{0}}\le\caninf^{1/m}$ (moderateness of $(\xi_\eps)_\eps$ necessarily follows from $(\fourier{\phi_{k_\eps,\eps}u_\eps})_\eps\in\EMod_\Schwartz$), we have ${\fourier{\phi u}(\xi)}\ne 0$, a contradiction.
\end{proof}

The following lemma is an easy generalization of \cite[Lemma 8.1.1]{Hormander}:
\begin{lemma}\label{lemma-cone}
Let $\xi_0\in\widetilde\Sphere$, $r\in\GenR_{>0}$, $r\le 1/2$ and $\Gamma = \{\xi\in\GenR^d: \abs[\big]{\frac{\xi}{\abs\xi}-\xi_0}\le 3r\}$. Then for each $\zeta\in \Gamma' = \{\xi\in\GenR^d: \abs[\big]{\frac{\xi}{\abs\xi}-\xi_0}\le r\}$ and for each $\eta\in \GenR^d$ with $\abs{\zeta-\eta}\le r\abs\zeta$, we have $\eta\in \Gamma$.
\end{lemma}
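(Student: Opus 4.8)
The plan is to reduce everything to the triangle inequality applied to the unit vectors $\eta/\abs\eta$, $\zeta/\abs\zeta$ and $\xi_0$, exactly as in the classical Lemma 8.1.1; the only genuine issues are to verify that the auxiliary manipulations remain valid over the partially ordered ring $\GenR$ and that $\abs\eta$ is invertible, so that $\eta/\abs\eta$ is well defined.

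First I would record the invertibility of $\abs\eta$. Since $\abs{\zeta-\eta}\le r\abs\zeta$, the reverse triangle inequality (valid in $\GenR$ because it holds representative-wise) gives
\[\abs\eta \ge \abs\zeta - \abs{\zeta-\eta}\ge (1-r)\abs\zeta\ge \tfrac12\abs\zeta.\]
As $\zeta\in\Gamma'$ forces $\abs\zeta\in\GenR_{>0}$, the element $\tfrac12\abs\zeta$ is positive and invertible; being bounded below by an invertible positive element, $\abs\eta$ is itself invertible, so $\eta/\abs\eta\in\widetilde\Sphere$ makes sense.

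The core of the argument is the estimate $\abs[\big]{\eta/\abs\eta - \zeta/\abs\zeta}\le 2r$, for which I would use the decomposition
\[\frac{\eta}{\abs\eta}-\frac{\zeta}{\abs\zeta} = \frac{\abs\zeta\,\eta - \abs\eta\,\zeta}{\abs\eta\,\abs\zeta} = \frac{\eta(\abs\zeta-\abs\eta) + (\eta-\zeta)\abs\eta}{\abs\eta\,\abs\zeta},\]
chosen precisely so that the factor $\abs\eta$ cancels and the denominator becomes $\abs\zeta$ rather than $\abs\eta$; this is what keeps the constant sharp (a naive bound with $\abs\eta$ in the denominator would only give $2r/(1-r)$ and spoil the inclusion). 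Bounding the numerator by $\abs\eta\bigl(\abs{\abs\zeta-\abs\eta}+\abs{\eta-\zeta}\bigr)$ and using $\abs{\abs\zeta-\abs\eta}\le\abs{\zeta-\eta}\le r\abs\zeta$ yields
\[\abs[\Big]{\frac{\eta}{\abs\eta}-\frac{\zeta}{\abs\zeta}} \le \frac{\abs{\abs\zeta-\abs\eta}+\abs{\eta-\zeta}}{\abs\zeta}\le \frac{2r\abs\zeta}{\abs\zeta}=2r.\]

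Finally, combining this with $\abs[\big]{\zeta/\abs\zeta-\xi_0}\le r$ (from $\zeta\in\Gamma'$) via the triangle inequality gives $\abs[\big]{\eta/\abs\eta-\xi_0}\le 2r+r=3r$, i.e.\ $\eta\in\Gamma$. The only point deserving care is that all of these are inequalities between generalized numbers: since each holds on representatives, and $\abs{\cdot}$, the ring operations and the order on $\GenR$ are all defined representative-wise, every step transfers without requiring the order to be total. I do not expect any real obstacle beyond selecting the right algebraic decomposition and checking the invertibility of $\abs\eta$.
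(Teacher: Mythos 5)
Your proof is correct and follows essentially the same route as the paper: the paper bounds $\abs[\big]{\frac\eta{\abs\eta}-\xi_0}$ by the three-term triangle inequality via $\frac{\eta}{\abs\zeta}$, and your decomposition $\frac{\eta(\abs\zeta-\abs\eta)+(\eta-\zeta)\abs\eta}{\abs\eta\abs\zeta}$ is exactly the sum of the paper's first two terms $\bigl(\frac\eta{\abs\eta}-\frac\eta{\abs\zeta}\bigr)+\bigl(\frac\eta{\abs\zeta}-\frac\zeta{\abs\zeta}\bigr)$ written over a common denominator, each bounded by $r$ for the same reasons. Your additional checks (invertibility of $\abs\eta$ from $\abs\eta\ge\frac12\abs\zeta$, and the representative-wise validity of the order manipulations in $\GenR$) match the paper's opening observation and are handled correctly.
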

\begin{proof}
Since $\abs{\eta}\ge \abs{\zeta}-\abs{\zeta-\eta}\ge \frac 12\abs{\zeta}$, also $\abs{\eta}$ is invertible. Then
\[\abs{\frac\eta{\abs\eta}-\xi_0}\le \abs{\frac\eta{\abs\eta}-\frac\eta{\abs\zeta}} + \abs{\frac\eta{\abs\zeta}-\frac\zeta{\abs\zeta}} +\abs{\frac\zeta{\abs\zeta}-\xi_0}\le 3 r.\]
\end{proof}

\begin{lemma}\label{lemma-projection-of-wave-front-2}
Let $x_0=[x_{0,\eps}]\in\widetilde\Omega_c$ and $\Gamma\subseteq \R^d$ a (nongeneralized) cone. Let $u=[u_\eps]\in\Gen(\Omega)$ be $\widetilde\Gen^\infty$-microlocally regular at $(x_0,\xi_0)$, for each $\xi_0\in\widetilde\Sphere\cap\widetilde\Gamma$. Then for each $m\in\N$
\begin{equation}\label{eq-regular-for-each-xi-2}
(\exists \eps_m) (\forall \eps\le\eps_m) (\forall \zeta\in\Gamma, \abs{\zeta}\ge \eps^{-2/m}) \bigl(\abs[\big]{\fourier{\phi_{2m,\eps} u_\eps}(\zeta)}\le\eps^{m-1}\bigr).
\end{equation}
\end{lemma}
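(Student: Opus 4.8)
The plan is to deduce \eqref{eq-regular-for-each-xi-2} from Lemma \ref{lemma-projection-of-wave-front-1} (applied with the \emph{same} $m$) by a convolution argument in the spirit of the proof of \cite[Lemma 8.1.1]{Hormander}. Fix $m$, let $\eps_m$ be as in Lemma \ref{lemma-projection-of-wave-front-1}, and allow it to be shrunk finitely often. Fix $\eps\le\eps_m$ and $\zeta\in\Gamma$ with $\abs\zeta\ge\eps^{-2/m}$, and apply Lemma \ref{lemma-projection-of-wave-front-1} to the \emph{particular} direction $\xi_0:=\zeta/\abs\zeta\in\Sphere\cap\Gamma$. This produces some $k\ge 2m$ for which $\abs[\big]{\fourier{\phi_{k,\eps}u_\eps}(\eta)}\le\eps^m$ whenever $\abs[\big]{\eta/\abs\eta-\xi_0}\le\eps^{1/m}$ and $\abs\eta\ge\eps^{-1/m}$. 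The task is then to estimate $\abs{\fourier{\phi_{2m,\eps}u_\eps}(\zeta)}$ by relating the small cut-off $\phi_{2m,\eps}$ to the larger one $\phi_{k,\eps}$, and I would distinguish two cases. If $k=2m$, then $\phi_{2m,\eps}=\phi_{k,\eps}$ and $\zeta$ itself lies in the sup-range above (its direction is exactly $\xi_0$, and $\abs\zeta\ge\eps^{-2/m}\ge\eps^{-1/m}$), so $\abs{\fourier{\phi_{2m,\eps}u_\eps}(\zeta)}\le\eps^m\le\eps^{m-1}$ immediately.

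If $k>2m$, then for $\eps$ small enough (uniformly in $k$, since $\tfrac1{2m}-\tfrac1k\ge\tfrac1{2m}-\tfrac1{2m+1}>0$) the support of $\phi_{2m,\eps}$, contained in $\{\abs{x-x_{0,\eps}}\le\eps^{1/(2m)}\}$, lies inside the set $\{\abs{x-x_{0,\eps}}\le\eps^{1/k}/2\}$ on which $\phi_{k,\eps}=1$. Hence $\phi_{2m,\eps}u_\eps=\phi_{2m,\eps}\,\phi_{k,\eps}u_\eps$ and, up to a harmless $(2\pi)^{-d}$,
\[\fourier{\phi_{2m,\eps}u_\eps}(\zeta)=\int\fourier{\phi_{2m,\eps}}(\eta)\,\fourier{\phi_{k,\eps}u_\eps}(\zeta-\eta)\,d\eta.\]
I would split this integral at $\abs\eta=r\abs\zeta$ with $r:=\eps^{1/m}/3\le 1/2$.

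On the good region $\abs\eta\le r\abs\zeta$, Lemma \ref{lemma-cone} (applied to $\xi_0$ with this $r$, so that $\zeta$ sits at the centre of the inner cone) shows that $\zeta-\eta$ stays in the cone $\abs[\big]{(\zeta-\eta)/\abs{\zeta-\eta}-\xi_0}\le\eps^{1/m}$, while $\abs{\zeta-\eta}\ge(1-r)\abs\zeta\ge\tfrac12\eps^{-2/m}\ge\eps^{-1/m}$; thus the integrand is bounded by $\eps^m\abs{\fourier{\phi_{2m,\eps}}(\eta)}$, and this contribution is at most $\eps^m\norm{\fourier{\phi_{2m,\eps}}}_{L^1}=\eps^m\norm{\fourier{\phi_0}}_{L^1}$, the $L^1$-norm being invariant under the scaling and translation defining $\phi_{2m,\eps}$. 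On the bad region $\abs\eta>r\abs\zeta$ I would play the rapid decay of $\fourier{\phi_{2m,\eps}}(\eta)=\eps^{d/(2m)}e^{-ix_{0,\eps}\cdot\eta}\fourier{\phi_0}(\eps^{1/(2m)}\eta)$ against a \emph{fixed} polynomial moderateness bound $\abs{\fourier{\phi_{k,\eps}u_\eps}(\theta)}\le\eps^{-N}\langle\theta\rangle^N$, which holds uniformly in $k$ because the derivatives of $\phi_{k,\eps}$ grow at most like $\eps^{-\abs\alpha}$ and its support has bounded volume. Here the hypothesis $\abs\zeta\ge\eps^{-2/m}$ is exactly what is needed: it forces $\eps^{1/(2m)}\abs\eta\ge\tfrac13\eps^{-1/(2m)}$ on this region, so after the change of variables $\theta=\eps^{1/(2m)}\eta$ a sufficiently large choice of the decay order $L$ in $\abs{\fourier{\phi_0}(\theta)}\le C_L\langle\theta\rangle^{-L}$ beats all the negative powers of $\eps$ coming from $\eps^{-N}$ and from $\langle\zeta-\eta\rangle^N$, making this part $\le\eps^m$ as well.

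Summing the two contributions gives $\abs{\fourier{\phi_{2m,\eps}u_\eps}(\zeta)}\le C\eps^m\le\eps^{m-1}$ for small $\eps$; this is precisely where the loss of one power of $\eps$ in the statement comes from, as it absorbs the constants $\norm{\fourier{\phi_0}}_{L^1}$ and $(2\pi)^{-d}$. I expect the main obstacle to be the bad-region estimate: one must verify that the elevated frequency threshold $\eps^{-2/m}$ makes the Schwartz decay of $\fourier{\phi_{2m,\eps}}$ dominate the moderate growth of $\fourier{\phi_{k,\eps}u_\eps}$ \emph{uniformly in the unbounded index $k$}, whereas the good region reduces cleanly to Lemma \ref{lemma-cone} together with the scale-invariance of the $L^1$-norm.
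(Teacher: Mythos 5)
Your proposal is correct and takes essentially the same route as the paper: the same reduction to Lemma \ref{lemma-projection-of-wave-front-1} at the particular direction $\xi_0=\zeta/\abs{\zeta}$, the same nesting $\phi_{2m,\eps}u_\eps=\phi_{2m,\eps}(\phi_{k,\eps}u_\eps)$ for small $\eps$ uniformly in $k>2m$, the same Fourier-side convolution split at $\abs{\zeta-\eta}\sim\eps^{1/m}\abs{\zeta}/3$, and the same use of Lemma \ref{lemma-cone} plus the scale-invariant $L^1$-norm of $\fourier{\phi_0}$ on the good region. The only (inessential) difference is the bad-region bookkeeping: the paper pairs the pointwise bound $\sup\abs[\big]{\fourier{\phi_{2m,\eps}}}\le C'_m\eps^{m+N}$ there with $\norm[\big]{\fourier{\phi_{k,\eps}u_\eps}}_{L^1}\le\norm[\big]{\fourier{\phi_{0}}}_{L^1}\norm{\fourier{u_\eps}}_{L^1}\le C\eps^{-N}$ via Young's inequality, whereas you integrate the Schwartz tail of $\fourier{\phi_{2m,\eps}}$ against a $k$-uniform moderateness bound --- which works, and in fact your $\langle\zeta-\eta\rangle^{N}$ factor can simply be dropped, since $\abs[\big]{\fourier{\phi_{k,\eps}u_\eps}}\le\norm{\phi_{k,\eps}u_\eps}_{L^1}\le C\eps^{-N}$ uniformly in $k$ and in the argument.
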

\begin{proof}
Let $m\in\N$ and $\eps$ sufficiently small (such that \eqref{eq-regular-for-each-xi-1} holds). Let $\zeta\in\Gamma$ with $\abs\zeta\ge \eps^{-2/m}$. Then $\xi_0:=\zeta/\abs{\zeta}\in\Sphere\cap\Gamma$. Denote $v_{k,\eps}:=\phi_{k,\eps}u_\eps$. Then there exists $k\ge 2m$ such that $\abs[\big]{\fourier{v_{k,\eps}}(\xi)}\le\eps^m$ for each $\xi\in\R^d$ with $\abs{\xi}\ge\eps^{-1/m}$ and $\abs[\big]{\frac{\xi}{\abs\xi}-\xi_0}\le\eps^{1/m}$. If $k>2m$, then we have $\phi_{2m,\eps}=\phi_{2m,\eps}\phi_{k,\eps}$ as soon as $\eps$ is sufficiently small (only depending on $m$, e.g.\ as soon as $\frac12 \eps^{1/(2m+1)}\ge \eps^{1/(2m)}$). Hence also $v_{2m,\eps}=\phi_{2m,\eps}v_{k,\eps}$, and
\[\fourier {v_{2m,\eps}} (\zeta)= \int_{\R^d}\fourier{\phi_{2m,\eps}}(\zeta-\eta)\fourier {v_{k,\eps}}(\eta)\,d\eta.
\]
Let $A:=\{\eta\in\R^d: \abs{\zeta-\eta}\le\frac{\eps^{1/m}\abs{\zeta}}3\}$. By Lemma \ref{lemma-cone}, $\abs[\big]{\fourier{v_{k,\eps}}(\eta)}\le \eps^m$ for each $\eta\in A$, hence
\[\abs{\int_A\fourier{\phi_{2m,\eps}}(\zeta-\eta)\fourier {v_{k,\eps}}(\eta)\,d\eta}\le\eps^m \norm[\big]{\fourier{\phi_{2m,\eps}}}_{L^1(\R^d)}= \eps^m\norm[\big]{\fourier{\phi_{0}}}_{L^1(\R^d)} =:C\eps^m.\]
Further, $\fourier{u}\in\Gen_\Schwartz(\R^d)$, hence $\norm{\fourier{u_\eps}}_{L^1}\le\eps^{-N}$ for some $N\in\N$. Also $\fourier{\phi_0}\in\Schwartz(\R^d)$, hence $\abs[\big]{\fourier{\phi_0}(\xi)}\le C_m \langle\xi\rangle^{-2m(m+N)}$ for each $\xi\in\R^d$. Thus for $\eta\in \R^d\setminus A$, $\abs{\zeta-\eta}\ge \eps^{1/m}\abs{\zeta}/3\ge \eps^{-1/m}/3$ and
\[\abs[\big]{\fourier{\phi_{2m,\eps}}(\zeta-\eta)}=\eps^{d/(2m)}\abs[\big]{\fourier{\phi_0}(\eps^{1/(2m)}(\zeta-\eta))} \le C_m (\eps^{-1/(2m)}/3)^{-2m(m+N)}= C'_m \eps^{m+N},\]
and thus
\[
\abs{\int_{\R^d\setminus A}\fourier{\phi_{2m,\eps}}(\zeta-\eta)\fourier {v_{k,\eps}}(\eta)\,d\eta}\le C'_m \eps^{m+N}\norm{\fourier{v_{k,\eps}}}_{L^1}\le C'_m\eps^{m+N}\norm[\big]{\fourier{\phi_{k,\eps}}}_{L^1}\norm{\fourier{u_\eps}}_{L^1}\le C''_m\eps^m,
\]
since $\norm[\big]{\fourier{\phi_{k,\eps}}}_{L^1}=\norm[\big]{\fourier{\phi_{0}}}_{L^1}$.
\end{proof}

In \cite{Dapic98}, several notions of pointwise regularity for elements of $\Gen(\Omega)$ were considered. By \cite[Cor.~5.5]{HVPointwise}, it follows that one of them is more fundamental, in the sense that the other notions can be described in terms of it. We therefore restrict to this notion only, slightly simplifying the notations (it was called $\Gentdinfty$-regularity in \cite{Dapic98,HVPointwise}):
\begin{df}
Let $u\in\Gen(\Omega)$. Let $x\in\widetilde\Omega_c$. Then $u\in\widetilde\Gen^\infty(x)$ if there exists $N\in\N$ (independent of $\alpha$) such that
\[\abs{\partial^\alpha u(x)}\le \caninf^{-N},\quad\forall \alpha\in\N^d.\]
Let $A\subseteq \GenR^d$. Then $u\in\widetilde\Gen^\infty(A)$ if $u\in\widetilde\Gen^\infty(x)$ for each $x\in A$.
\end{df}

\begin{thm}\label{proj-of-WF-is-singular-support}
Let $x_0\in\widetilde\Omega_c$. For $u\in\Gen(\Omega)$, the following are equivalent:
\begin{enumerate}
\item $u$ is $\widetilde\Gen^\infty$-microlocally regular at $(x_0,\xi_0)$, for each $\xi_0\in\widetilde\Sphere$
\item $u\in\widetilde\Gen^\infty(V)$ for some slow scale neighbourhood $V$ of $x_0$.
\end{enumerate}
\end{thm}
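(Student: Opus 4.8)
The plan is to prove the two implications separately, deriving $(1)\Rightarrow(2)$ from the frequency estimate of Lemma~\ref{lemma-projection-of-wave-front-2} applied to the full cone, and $(2)\Rightarrow(1)$ from an overspill argument that turns pointwise regularity into a uniform bound. Throughout I may assume $u\in\Gen_c(\Omega)$, since multiplying by a fixed $\chi\in\test(\Omega)$ equal to $1$ near $x_{0,\eps}$ alters neither condition (for~(1) by Corollary~\ref{regularity-via-cut-off}) and yields $\fourier u\in\Gen_\Schwartz$, hence $\norm{\fourier{u_\eps}}_{L^1}\le\eps^{-N_0}$ for some $N_0$. For $(1)\Rightarrow(2)$ I apply Lemma~\ref{lemma-projection-of-wave-front-2} with the nongeneralized cone $\Gamma=\R^d$; as $\widetilde\Gamma=\GenR^d\supseteq\widetilde\Sphere$, hypothesis~(1) supplies for each $m$ an $\eps_m$ with $\abs{\fourier{\phi_{2m,\eps}u_\eps}(\zeta)}\le\eps^{m-1}$ whenever $\abs{\zeta}\ge\eps^{-2/m}$ and $\eps\le\eps_m$. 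Diagonalizing, I choose $m_\eps\to\infty$ with $m_\eps=m$ on $(\eps_{m+1},\eps_m]$ and set $\phi:=[\phi_{2m_\eps,\eps}]$, a slow scale cutoff equal to $1$ on a slow scale neighbourhood $V$ of $x_0$. For $x\in V$ one has $\partial^\alpha u(x)=\partial^\alpha(\phi u)(x)$, which I estimate by Fourier inversion, splitting at $T_\eps=\eps^{-2/m_\eps}$; on $\abs{\xi}\le T_\eps$, bounding $\abs{\xi}^{\abs{\alpha}}\le T_\eps^{\abs{\alpha}}$ and using $\norm{\fourier{\phi u_\eps}}_{L^1}\le\norm{\fourier{\phi_0}}_{L^1}\norm{\fourier{u_\eps}}_{L^1}$ gives a contribution $\le\eps^{-2\abs{\alpha}/m_\eps}\norm{\fourier{\phi_0}}_{L^1}\eps^{-N_0}\le\eps^{-N_0-1}$ for small $\eps$.

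The delicate part is the region $\abs{\xi}\ge T_\eps$, where Lemma~\ref{lemma-projection-of-wave-front-2} provides only the uniform smallness $\abs{\fourier{\phi u_\eps}(\xi)}\le\eps^{m_\eps-1}$, with no decay in $\xi$. I would combine this with the moderate Schwartz decay $\langle\xi\rangle^k\abs{\fourier{\phi u_\eps}(\xi)}\le\eps^{-N_k}$ (valid because $\fourier{\phi u}\in\Gen_\Schwartz$) via a geometric mean: with $k=2(\abs{\alpha}+d+1)$, writing $\abs{\fourier{\phi u_\eps}}=\abs{\fourier{\phi u_\eps}}^{1/2}\abs{\fourier{\phi u_\eps}}^{1/2}$ bounds the integrand by $\eps^{(m_\eps-1-N_k)/2}\langle\xi\rangle^{-d-1}$, which integrates to $O(1)$ as soon as $\eps$ is small enough that $m_\eps\ge N_k+1$. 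Since $N_k$ depends only on $\alpha$ (it forces $\eps$ smaller but does not enlarge the final exponent), both contributions are $\le\eps^{-N}$ with $N=N_0+2$ independent of $\alpha$; hence $u\in\widetilde\Gen^\infty(x)$ for every $x\in V$, i.e.\ $u\in\widetilde\Gen^\infty(V)$.

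For $(2)\Rightarrow(1)$ the crucial step is to upgrade the pointwise hypothesis to a uniform one: to find $N$ and a slow scale neighbourhood $V'$ with $\sup_{\abs{x-x_{0,\eps}}\le\eps^{1/m},\,\abs{\beta}\le m}\abs{\partial^\beta u_\eps(x)}\le\eps^{-N}$ for small $\eps$, for every $m$. I would produce $N$ by contradiction: if none works, then for each $N$ this estimate fails for some $m_N$ (with $m_N\uparrow\infty$, as failure at $m$ persists for all larger $m$), and along the witnessing $\eps$-sequence the finiteness of $\{\beta:\abs{\beta}\le m_N\}$ lets me pigeonhole a single $\beta^*(N)$ with $\abs{\partial^{\beta^*(N)}u_\eps}>\eps^{-N}$ at some point within $\eps^{1/m_N}$ of $x_{0,\eps}$. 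Dedicating disjoint infinite $\eps$-subsequences to the various $N$, I assemble one net $x^*$ with $x^*\in V$ (slow scale closeness holds since $\eps^{1/m_N}\le r_\eps$); then, for each $N$, the index $\beta^*(N)$ together with its sequence witnesses $\abs{\partial^{\beta^*(N)}u(x^*)}\not\le\caninf^{-N}$, so $u\notin\widetilde\Gen^\infty(x^*)$, contradicting $x^*\in V$. A diagonalization in $m$ then yields the slow scale neighbourhood $V'$.

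Finally, I choose a slow scale cutoff $\phi$ with $\operatorname{supp}\phi\subseteq V'$ and $\phi=1$ for $x\approx_\fast x_0$; the uniform bound and the Leibniz rule give $\phi u\in\Gen^\infty_c(\Omega)$, so $\fourier{\phi u}\in\Gen^\infty_\Schwartz\subseteq\Gen_{ss}(\GenR^d)$, whence $\fourier{\phi u}(\xi)=0$ for all $\xi\in\GenR^d_{fs}$, in particular on each $\Gamma_\fast(\xi_0)$. By Corollary~\ref{def-regular-with-cutoff} this is exactly $\widetilde\Gen^\infty$-microlocal regularity at $(x_0,\xi_0)$ for every $\xi_0\in\widetilde\Sphere$. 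I expect the two most demanding points to be precisely these: obtaining a high-frequency bound whose exponent is independent of $\alpha$ in the first implication (the uniform smallness carries no $\xi$-decay and must be interpolated against the $\alpha$-dependent Schwartz bounds), and the overspill passage from pointwise $\widetilde\Gen^\infty$ on $V$ to a uniform-in-$\alpha$ estimate in the second, where the bad multi-indices may depend on $\eps$ and have to be pinned down by pigeonhole on the finite set $\{\abs{\beta}\le m\}$.
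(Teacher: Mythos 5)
Your proof is correct, and it follows the same skeleton as the paper's in one direction while replacing the paper's black-box citations with self-contained arguments. For $(1)\Rightarrow(2)$ the paper does exactly what you do up to the diagonalization (Lemma \ref{lemma-projection-of-wave-front-2} with $\Gamma=\R^d$, then $m_\eps\to\infty$ and $\phi:=[\phi_{2m_\eps,\eps}]$), but then finishes in one line: the diagonalized estimate says $\fourier{\phi u}(\xi)=0$ for all $\xi\in\GenR^d_{fs}$, so $\phi u\in\fourier{\Gen_{ss}}(\R^d)\cap\Gen_c(\Omega)\subseteq\Gen_c^\infty(\Omega)$ by the structure theorem $\Gen_{ss}\cap\fourier{\Gen_{ss}}=\Gen^\infty_\Schwartz$ cited from the pointwise-characterization paper (Thm.~3.13 there). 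Your interpolation argument (splitting the inversion integral at $T_\eps=\eps^{-2/m_\eps}$ and taking a geometric mean of the uniform smallness with the $\alpha$-dependent Schwartz moderateness, keeping the exponent $N_0+2$ fixed while only the threshold in $\eps$ depends on $\alpha$) is essentially a direct proof of the special case of that theorem you need, and it correctly isolates the one genuinely delicate point, namely that the high-frequency bound carries no $\xi$-decay. Similarly, for $(2)\Rightarrow(1)$ the paper is terse: it asserts the existence of $\phi=[\phi_{m_\eps,\eps}]$ with $\phi u\in\Gen^\infty_c$, implicitly relying on the uniformization of the pointwise $\widetilde\Gen^\infty$ hypothesis via internal sets (made explicit in the Proposition following the theorem, which cites Prop.~5.3 of the same reference); your contradiction argument with pigeonholing over $\{\beta:\abs\beta\le m_N\}$ and disjoint $\eps$-subsequences dedicated to each $N$ is precisely the overspill mechanism underlying that citation, reconstructed by hand. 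What the paper's route buys is brevity and reuse of general structure theorems; what yours buys is a proof readable without the companion paper.

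Two small inaccuracies, neither fatal. First, the parenthetical claim that failure of the uniform estimate at $m$ persists for all larger $m$ is not justified: increasing $m$ shrinks the ball $\abs{x-x_{0,\eps}}\le\eps^{1/m}$ while enlarging the set of multi-indices, so the supremum is not monotone in $m$. Fortunately you never need $m_N\uparrow\infty$: for each fixed $N$ the witnesses lie in $V$ anyway, since $\eps^{1/m_N}\le r_\eps$ for $\eps$ small depending on $N$, and you may take each dedicated subsequence inside such a range. Second, a violation $\abs{\partial^{\beta^*(N)}u_{\eps}(x^*_\eps)}>\eps^{-N}$ along a subsequence does not literally negate $\abs{\partial^{\beta^*(N)}u(x^*)}\le\caninf^{-N}$ in $\GenR$ (the order allows negligible corrections); it negates $\le\caninf^{-(N-1)}$, so conclude by taking $N=N^*+1$ against a putative exponent $N^*$ for $x^*$ --- the same shift the paper performs (with $\eps_n^{-N-1}$ versus $\caninf^{-N}$) in the proposition after the theorem. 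With these cosmetic repairs the argument is complete.
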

\begin{proof}
$(1)\Rightarrow(2)$: Let $\eps_m$ be as in Lemma \ref{lemma-projection-of-wave-front-2} with $\Gamma=\R^d$. W.l.o.g., $(\eps_m)_m$ decreasingly tends to $0$. Let $m_\eps:= m$ for each $\eps\in (\eps_{m+1},\eps_m]$. Then $m_\eps\to+\infty$ als $\eps\to 0$, and for small $\eps$, we have that $\abs[\big]{\fourier{\phi_{m_\eps,\eps} u_\eps}(\zeta)}\le\eps^{m_\eps-N-1}$ as soon as $\abs{\zeta}\ge \eps^{-2/m_\eps}$. We conclude that $\fourier{\phi u}(\xi)=0$ for each $\xi\in\GenR^d_{fs}$ and for $\phi:=[\phi_{m_\eps,\eps}]\in\Gen_c^\infty(\R^d)$. Hence $\phi u\in \fourier{\Gen_{ss}}(\R^d)\cap \Gen_c(\Omega)\subseteq\Gen_c^\infty(\Omega)$. Since $\phi=1$ on a slow scale neighbourhood $V$ of $x_0$, we have that $u\in \widetilde\Gen^\infty(V)$.

$(2)\Rightarrow(1)$: there exists $\phi\in\Gen_c^\infty(\Omega)$ with $\phi = 1$ on a slow scale neighbourhood of $x_0$ and with $\phi u\in\Gen_c^\infty(\R^d)$ (e.g., $\phi=[\phi_{m_\eps,\eps}]$ for a suitable $m_\eps\to \infty$). Since  $\phi u\in \Gen^\infty_\Schwartz(\R^d)\subseteq \fourier{\Gen_{ss}}(\R^d)$, $\fourier{\phi u}(\xi) = 0$ for each $\xi\in\GenR^d_{fs}$.
\end{proof}

Again, we can equivalently reformulate the condition in the previous theorem:
\begin{prop}
For $u\in\Gen(\Omega)$, the following are equivalent:
\begin{enumerate}
\item $u\in\widetilde\Gen^\infty(V)$ for some slow scale neighbourhood of $V$ of $x_0$
\item $(\exists N\in\N)$ $(\forall \alpha\in\N^d)$ $(\forall x\in\GenR^d, x\approx_\fast x_0)$ $(\abs{\partial^\alpha u(x)}\le\caninf^{-N})$
\item $(\exists N\in\N)$ $(\forall m\in\N)$ $(\forall \alpha\in\N^d)$
\[
\sup_{\abs{x-x_{0,\eps}}\le\eps^{1/m}}\abs{\partial^\alpha u_\eps(x)}\le\eps^{-N},\quad\text{for small $\eps$}.
\]
\end{enumerate}
\end{prop}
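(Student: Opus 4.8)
The plan is to prove the cycle $(1)\Rightarrow(2)\Rightarrow(3)\Rightarrow(1)$. Two of the three implications are variants of Lemma \ref{equiv-local-equality} and I would dispatch them quickly; the whole difficulty sits in $(1)\Rightarrow(2)$, which upgrades a pointwise, $x$-dependent bound on a slow scale neighbourhood to a bound with one exponent $N$ valid on the entire fast-scale monad (essentially an overspill phenomenon). For $(2)\Rightarrow(3)$ I argue by contraposition exactly as in $(2)\Rightarrow(3)$ of Lemma \ref{equiv-local-equality}, keeping the constant $N$ from $(2)$: were the estimate in $(3)$ to fail for some $m$ and $\alpha$, I would select along a sequence $\eps_n\downarrow0$ points $y_{\eps_n}$ with $\abs{y_{\eps_n}-x_{0,\eps_n}}\le\eps_n^{1/m}$ and $\abs{\partial^\alpha u_{\eps_n}(y_{\eps_n})}>\eps_n^{-N}$, extend them by $x_{0,\eps}$ off the sequence to a moderate net representing some $y\approx_\fast x_0$, and contradict $\abs{\partial^\alpha u(y)}\le\caninf^{-N}$.

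For $(3)\Rightarrow(1)$ I use the diagonal construction from $(3)\Rightarrow(1)$ of Lemma \ref{equiv-local-equality}, but diagonalising over $\alpha$ as well. Let the estimate in $(3)$ for the pair $(m,\alpha)$ hold for $\eps\le\delta_{m,\alpha}$; fix an enumeration $(\alpha^{(j)})_j$ of $\N^d$, set $\delta^*_m:=\min_{j\le m}\delta_{m,\alpha^{(j)}}$, pick $\eta_m\downarrow0$ with $\eta_m\le\delta^*_m$ small enough that $r:=[\eps^{1/m_\eps}]\approx_\slow0$, and put $m_\eps:=m$ on $(\eta_{m+1},\eta_m]$, so that $m_\eps\to\infty$. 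For $x\in V:=\{x\in\GenR^d:\abs{x-x_0}\le r\}$ and any $\alpha^{(j)}$, once $\eps$ is small enough that $m_\eps\ge j$ (whence $\eps\le\eta_{m_\eps}\le\delta^*_{m_\eps}\le\delta_{m_\eps,\alpha^{(j)}}$) and $\abs{x_\eps-x_{0,\eps}}\le\eps^{1/m_\eps}$, applying $(3)$ at level $m_\eps$ gives $\abs{\partial^{\alpha^{(j)}}u_\eps(x_\eps)}\le\eps^{-N}$; hence $\abs{\partial^{\alpha^{(j)}}u(x)}\le\caninf^{-N}$ with the single $N$ of $(3)$, and $u\in\widetilde\Gen^\infty(V)$.

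The crux $(1)\Rightarrow(2)$ I prove by contraposition. If $(2)$ fails, then for every $N$ there are $\alpha_N\in\N^d$ and $x_N\approx_\fast x_0$ with $\abs{\partial^{\alpha_N}u(x_N)}\not\le\caninf^{-N}$; since a slow scale neighbourhood $V=\{x:\abs{x-x_0}\le r\}$ ($r\approx_\slow0$) from $(1)$ contains the whole fast-scale monad, each $x_N\in V$. The aim is to build one point $x^*\in V$ with $u\notin\widetilde\Gen^\infty(x^*)$, contradicting $(1)$. The obstacle is that $\alpha_N$ varies with $N$, so a single $\eps$-sequence would only spoil one derivative. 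I resolve this with a doubly-indexed diagonalisation. From $\abs{\partial^{\alpha_N}u(x_N)}\not\le\caninf^{-N}$, the set $T_N:=\{\eps:\abs{\partial^{\alpha_N}u_\eps(x_{N,\eps})}>\eps^{-N}\}$ accumulates at $0$; I choose pairwise distinct $\eps_{N,k}\in T_N$ with $\eps_{N,k}<1/(N+k)$ and small enough that $\abs{x_{N,\eps_{N,k}}-x_{0,\eps_{N,k}}}\le r_{\eps_{N,k}}$ (possible since $x_N\in V$), and set $x^*_\eps:=x_{N,\eps}$ if $\eps=\eps_{N,k}$ and $x^*_\eps:=x_{0,\eps}$ otherwise.

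Then $\abs{x^*_\eps-x_{0,\eps}}\le r_\eps$ for every $\eps$, so $x^*$ is moderate and lies in $V$ (inside $\widetilde\Omega_c$ once $V$ is small). For each $N$ the sequence $(\eps_{N,k})_k$ accumulates at $0$ and keeps $\abs{\partial^{\alpha_N}u_\eps(x^*_\eps)}>\eps^{-N}$, so all these violations are realised simultaneously at $x^*$. Were $u\in\widetilde\Gen^\infty(x^*)$, there would be a single $N^*$ with $\abs{\partial^\alpha u(x^*)}\le\caninf^{-N^*}$ for all $\alpha$; but the datum at level $N^*+1$ gives $\abs{\partial^{\alpha_{N^*+1}}u_\eps(x^*_\eps)}>\eps^{-(N^*+1)}\ge2\eps^{-N^*}$ along $\eps_{N^*+1,k}\downarrow0$, which exceeds $\caninf^{-N^*}$ — a contradiction. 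I expect the only delicate point to be the simultaneous choice of the $\eps_{N,k}$: distinct, inside each $T_N$, and below the $x_N$-dependent thresholds that guarantee $x^*\in V$; the containment $\monad_\fast(x_0)\subseteq V$ and the moderateness of $x^*$ are then routine.
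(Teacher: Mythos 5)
Your cycle $(1)\Rightarrow(2)\Rightarrow(3)\Rightarrow(1)$ is the same as the paper's, and your $(2)\Rightarrow(3)$ and $(3)\Rightarrow(1)$ coincide with its arguments (sequence extraction by contraposition; diagonalisation over a shrinking sequence of thresholds, where your enumeration of $\N^d$ with $\delta^*_m:=\min_{j\le m}\delta_{m,\alpha^{(j)}}$ is exactly the paper's implicit step of taking, for each $m$, one threshold valid for all $\abs{\alpha}\le m$ simultaneously). The genuine difference is in the crux $(1)\Rightarrow(2)$: the paper disposes of it in two lines by observing that a slow scale neighbourhood contains an internal set $\{x\in\GenR^d:\abs{x-x_0}\le[\eps^{1/n_\eps}]\}$ (an overspill fact from \cite{HVInternal}) and then invoking the uniformization result \cite[Prop.~5.3]{HVPointwise}, which converts pointwise $\widetilde\Gen^\infty$-regularity on an internal set into a bound with a single exponent $N$. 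Your doubly-indexed construction --- choosing pairwise distinct $\eps_{N,k}\in T_N$ below the $x_N$-dependent thresholds and splicing them into one net $x^*$ that realises all countably many violations simultaneously --- is in effect a self-contained re-proof of the special case of that cited proposition; it is correct (the monad containment $\monad_\fast(x_0)\subseteq V$, the greedy distinct choice from the accumulating sets $T_N$, and the pointwise bound $\abs{x^*_\eps-x_{0,\eps}}\le r_\eps$ all go through), and you even avoid the internal-set overspill step entirely by working directly with $V$. What the paper's route buys is brevity and placement within the general internal-set framework; what yours buys is a proof that makes the saturation mechanism explicit and needs no external machinery.

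One quantitative slip in your $(2)\Rightarrow(3)$: keeping the same $N$ does not work, because in $\GenR$ the relation $\abs{\partial^\alpha u(y)}\le\caninf^{-N}$ only means $\abs{\partial^\alpha u_\eps(y_\eps)}\le\eps^{-N}+n_\eps$ for some negligible net $(n_\eps)_\eps$, so points with $\abs{\partial^\alpha u_{\eps_n}(y_{\eps_n})}>\eps_n^{-N}$ yield no contradiction (consider values equal to $\eps^{-N}+\eps^{100}$). The paper negates $(3)$ at level $N+1$, extracting points with $\abs{\partial^\alpha u_{\eps_n}(x_{\eps_n})}>\eps_n^{-N-1}$, whose excess $\eps_n^{-N-1}\ge 2\eps_n^{-N}$ beats any negligible slack; since $(3)$ is existential in $N$, proving it with $N+1$ suffices. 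Note that you handle precisely this point correctly in your $(1)\Rightarrow(2)$ argument (playing $N^*+1$ against $N^*$), so the repair is immediate, but as written the step is incomplete.
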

\begin{proof}
$(1)\implies (2)$: there exist $n_\eps\to \infty$ (as $\eps\to 0$) such that the internal set \cite{HVInternal} $\{x\in\GenR^d: \abs{x-x_0}\le[\eps^{1/n_\eps}]\} = [\{x\in\R^d: \abs{x-x_{0,\eps}}\le \eps^{1/n_\eps}\}]\subseteq V$. The result follows by \cite[Prop.\ 5.3]{HVPointwise}.

$(2)\implies (3)$: let $N\in\N$ as in (2). Assuming that (3) does not hold, we find some $m\in\N$, some $\alpha\in\N^d$, some decreasing sequence $(\eps_n)_n$ tending to $0$ and some $x_{\eps_n}\in\R^d$ with $\abs{x_{\eps_n}-x_{0,{\eps_n}}}\le\eps_n^{1/m}$ and $\abs{\partial^\alpha u_{\eps_n}(x_{\eps_n})}>\eps_n^{-N-1}$, for each $n$. Extending the sequence $(x_{\eps_n})_n$ to a net $(x_\eps)_\eps$, we obtain $x=[x_\eps]\in\GenR^d$ with $\abs{x-x_0}\le\caninf^{1/m}$ and $\abs{\partial^\alpha u(x)}\not\le\caninf^{-N}$, contradicting (2).

$(3)\implies (1)$: for each $m\in\N$, there exists $\eps_m>0$ such that
\[
\sup_{\abs{\alpha}\le m,\,\abs{x-x_{0,\eps}}\le\eps^{1/m}}\abs{\partial^\alpha u_\eps(x)}\le\eps^{-N},\quad \forall \eps\le\eps_m. 
\]
W.l.o.g., $(\eps_m)_m$ decreasingly tends to $0$. Let $m_\eps:= m$ for each $\eps\in (\eps_{m+1},\eps_m]$. Then $m_\eps\to+\infty$ als $\eps\to 0$, and for small $\eps$, we have that $\sup_{\abs{\alpha}\le m_\eps,\,\abs{x-x_{0,\eps}}\le\eps^{1/m_\eps}}\abs{\partial^\alpha u_\eps(x)}\le\eps^{-N}$. Then $u\in\widetilde\Gen^\infty(V)$ for $V:=\{x\in\GenR^d: \abs{x-x_0}\le [\eps^{1/m_\eps}]\}$.
\end{proof}
This should be distinguished from $u\in\widetilde\Gen^\infty(\monad_\fast(x_0))$, which is (again by \cite[Prop.\ 5.3]{HVPointwise}) equivalent with $(\forall m\in\N)$ $(\exists N\in\N)$ $(\forall \alpha\in\N^d)$
\[\sup_{\abs{x-x_{0,\eps}}\le\eps^{1/m}}\abs{\partial^\alpha u_\eps(x)}\le\eps^{-N},\quad\text{for small $\eps$}.\]

\section{Consistency with $\Gen^\infty$-microlocal regularity}
We first give some pointwise characterizations of $\Gen^\infty$-microlocal regularity:
\begin{lemma}\label{equiv-zero-in-standard-cone}
For $v=[v_\eps]\in \Gen_c(\Omega)$ and $\Gamma\subseteq\R^d$ a (nongeneralized) cone, the following are equivalent:
\begin{enumerate}
\item $\fourier v(\xi)= 0,\ \forall \xi \in \widetilde\Gamma\cap \widetilde\R^d_{fs}$
item (assuming $(v_\eps)_\eps\in \EMod_{\Schwartz}$) for each $m\in\N$,
\[
\sup_{\xi\in\Gamma,\, \abs\xi\ge\eps^{-1/m}}\abs{\fourier v_\eps(\xi)}\le\eps^m,\quad\text{for small $\eps$}
\]
\item (assuming $(v_\eps)_\eps\in \EMod_{\Schwartz}$) there exists $N\in\N$ such that for each $m\in\N$, 
\[\sup_{\xi\in\Gamma}\langle\xi\rangle^{m}\abs{\fourier v_\eps(\xi)}\le\eps^{-N},\quad\text{for small $\eps$}.
\]
\end{enumerate}
\end{lemma}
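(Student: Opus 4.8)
The plan is to read this as the ``nongeneralized cone'' counterpart of Lemma~\ref{equiv-zero-in-cone}, replacing the conic slow scale neighbourhood of $\xi_0$ (resp.\ $\Gamma_\fast(\xi_0)$) throughout by the fixed cone $\Gamma$ (resp.\ $\widetilde\Gamma$). Because $\Gamma$ does not shrink with $\eps$, the angular bookkeeping drops out and only the radial size of $\xi$ matters. I would first prove $(1)\iff(2)$ and then $(2)\iff(3)$. Throughout I may assume $\Gamma\ne\{0\}$, the case $\Gamma=\{0\}$ being trivial since then no point of $\Gamma$ is fast scale.

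For $(2)\implies(1)$ I would argue directly: given $\xi=[\xi_\eps]\in\widetilde\Gamma\cap\widetilde\R^d_{fs}$, pick $a\in\R_{>0}$ and a representative with $\xi_\eps\in\Gamma$ and $\abs{\xi_\eps}\ge\eps^{-a}$ for small $\eps$; for any prescribed decay exponent $q$, apply (2) at level $m\ge\max(q,1/a)$ to obtain $\abs{\fourier v_\eps(\xi_\eps)}\le\eps^m\le\eps^q$, so that $\fourier v(\xi)=0$. For the converse $(1)\implies(2)$ I would argue by contraposition in the style of Lemma~\ref{equiv-local-equality}: if (2) fails at some $m$, extract $\eps_n\downarrow 0$ and $\xi_{\eps_n}\in\Gamma$ with $\abs{\xi_{\eps_n}}\ge\eps_n^{-1/m}$ and $\abs{\fourier v_{\eps_n}(\xi_{\eps_n})}>\eps_n^m$, complete $(\xi_{\eps_n})_n$ to a net by setting $\xi_\eps:=\eps^{-1/m}\omega$ off the sequence for a fixed $\omega\in\Gamma\cap\Sphere$, and read off a point $\xi=[\xi_\eps]\in\widetilde\Gamma\cap\widetilde\R^d_{fs}$ with $\fourier v(\xi)\ne0$, contradicting (1).

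For the quantitative equivalence, $(3)\implies(2)$ is the easy direction: given $m$, apply (3) at weight $m(m+N)$ and use $\langle\xi\rangle\ge\abs\xi\ge\eps^{-1/m}$ to get $\abs{\fourier v_\eps(\xi)}\le\eps^{-N}\langle\xi\rangle^{-m(m+N)}\le\eps^m$ on the far part of $\Gamma$. The implication $(2)\implies(3)$ is where the work lies. Fixing $N$ from the weight-$0$ Schwartz-moderateness bound $\sup_\xi\abs{\fourier v_\eps(\xi)}\le\eps^{-N}$, I would split $\Gamma$ at radius $\eps^{-1/k}$: on the near part $\abs\xi\le\eps^{-1/k}$ the weight $\langle\xi\rangle^m$ costs only $\eps^{-m/k}$, which is harmless once $k\ge m$; on the far part $\abs\xi\ge\eps^{-1/k}$ I would combine the $\eps^k$-smallness from (2) with a weight-$2m$ Schwartz-moderateness bound $\abs{\fourier v_\eps(\xi)}\le\eps^{-N_{2m}}\langle\xi\rangle^{-2m}$ via the geometric mean, giving $\langle\xi\rangle^m\abs{\fourier v_\eps(\xi)}\le\eps^{(k-N_{2m})/2}$. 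Choosing $k:=\max(m,N_{2m})$ renders both parts $\le\eps^{-N}$ for small $\eps$, after adjusting $N$ by an additive constant.

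The main obstacle I anticipate is exactly the uniformity of $N$ in $(2)\implies(3)$: condition (2) supplies, on each region $\{\xi\in\Gamma:\abs\xi\ge\eps^{-1/k}\}$, an $\eps$-power of smallness carrying no decay in $\xi$, whereas Schwartz-moderateness supplies $\xi$-decay at the cost of an $\eps$-blow-up whose exponent grows with the weight; balancing the two, via the geometric mean and the $\eps$-dependent cut radius $\eps^{-1/k}$, is what forces $N$ to be independent of $m$. A second, recurring technical point is that in the contradiction argument for $(1)\implies(2)$ the net $(\xi_\eps)_\eps$ built from a failure of (2) is genuinely moderate; this follows, as noted after Lemma~\ref{equiv-zero-in-cone}, from $(\fourier v_\eps)_\eps\in\EMod_\Schwartz$, since wherever $\fourier v_\eps$ is non-negligible the radius $\abs{\xi_\eps}$ is necessarily polynomially bounded in $\eps^{-1}$.
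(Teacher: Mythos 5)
Your proposal is correct, and for the equivalence $(1)\Leftrightarrow(2)$ it is essentially the paper's method: the paper proves this lemma in one line, declaring it ``similar to Lemma \ref{equiv-zero-in-cone}'', whose proof in turn consists of the sequence-extraction arguments of Lemma \ref{equiv-local-equality}; your contraposition argument for $(1)\Rightarrow(2)$, including the key observation that moderateness of the extracted net $(\xi_\eps)_\eps$ is forced by $(\fourier{v_\eps})_\eps\in\EMod_\Schwartz$ (wherever $\abs{\fourier{v_\eps}(\xi_\eps)}>\eps^m$, the radius is polynomially bounded), matches exactly the mechanism the paper flags in its remark after Lemma \ref{equiv-zero-in-cone}. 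Where you genuinely depart from the paper is the uniform-$N$ condition: in the scheme of Lemma \ref{equiv-zero-in-cone}, the analogue of (3) (condition (4) there) is again obtained by extracting a violating net from the pointwise vanishing condition, with the converse labelled straightforward, whereas you derive (3) directly from (2) by an explicit interpolation --- splitting $\Gamma$ at radius $\eps^{-1/k}$ with $k:=\max(m,N_{2m})$, paying only $\eps^{-m/k}\le\eps^{-1}$ against the weight-zero moderateness bound on the near part, and taking the geometric mean of the $\eps^{k}$-smallness from (2) with a weight-$2m$ Schwartz bound on the far part, so that $\langle\xi\rangle^m\abs{\fourier{v_\eps}(\xi)}\le\eps^{(k-N_{2m})/2}\le 1$ there. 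This buys something the extraction route leaves implicit: a concrete uniform exponent (the weight-zero moderateness exponent $N_0$ plus an additive constant), manifestly independent of $m$, obtained constructively rather than by contradiction; and it exploits precisely the simplification you identify at the outset, namely that the fixed nongeneralized cone eliminates the $\eps$-dependent angular bookkeeping which, in Lemma \ref{equiv-zero-in-cone}, makes such a direct derivation more delicate and presumably led the author to route everything through extraction. Your remaining quantitative steps --- $(3)\Rightarrow(2)$ via the weight $m(m+N)$, and the handling of the trivial cone --- are sound, so the proposal stands as a complete proof, slightly more quantitative than the paper's.
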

\begin{proof}
Similar to Lemma \ref{equiv-zero-in-cone} (cf.\ also \cite[Thm.~3.12]{HVPointwise}).
\end{proof}

\begin{lemma}\label{classical-regularity-expressed-pointwise}
Let $u\in\Gen(\Omega)$. Then the following are equivalent:
\begin{enumerate}
\item $u$ is $\Gen^\infty$-microlocally regular at $(x_0,\xi_0)\in\Omega\times \Sphere$
\item there exist $\phi\in\test(\Omega)$ with $\phi(x_0)=1$ and a conic neighbourhood $\Gamma$ of $\xi_0$ s.t.
\[
\fourier{\phi u}(\xi)=0,\quad \forall \xi\in\widetilde\Gamma\cap\GenR^d_{fs}.
\]
\item there exist $\phi\in\Gen_c^\infty(\Omega)$, a (nongeneralized) neighbourhood $V$ of $x_0$, a conic neighbourhood $\Gamma$ of $\xi_0$ and $R\in\GenR_{ss}$ s.t.
\[\begin{cases}
\abs{\partial^\alpha\phi(x)}\le R\\
\abs{\phi(x)}\ge \frac1R,
\end{cases}
\!\forall x\in\widetilde V, \ \forall \alpha\in\N^d \text{ and }\ \fourier{\phi u}(\xi)= 0,\ \forall \xi \in \widetilde\Gamma\cap \widetilde\R^d_{fs}.\]
\end{enumerate}
\end{lemma}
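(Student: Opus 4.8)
The plan is to reduce all three conditions to the single pointwise statement $\fourier{\phi u}(\xi)=0$ on $\widetilde\Gamma\cap\widetilde\R^d_{fs}$ and then to pass between a fixed classical cut-off and a generalized slow scale cut-off; concretely I would prove $(1)\Leftrightarrow(2)$ and $(2)\Leftrightarrow(3)$. The equivalence $(1)\Leftrightarrow(2)$ is immediate from Lemma \ref{equiv-zero-in-standard-cone}: for a fixed $\phi\in\test(\Omega)$ with $\phi(x_0)=1$ and a nongeneralized cone $\Gamma$, we have $v:=\phi u\in\Gen_c(\Omega)$ with $(\phi u_\eps)_\eps\in\EMod_\Schwartz$, and the defining estimate \eqref{eq-classical-regularity} is literally condition (3) of that lemma for $v$, while (2) here is its condition (1). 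Since only the \emph{existence} of an admissible pair $(\phi,\Gamma)$ is asserted on both sides, the two are equivalent.

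For $(2)\Rightarrow(3)$ I would keep the same classical $\phi$ and cone $\Gamma$ and merely fabricate the bounds. A classical $\phi\in\test(\Omega)$ satisfies $\abs{\partial^\alpha\phi(x)}\le C_\alpha$ for all $x$, with constants $C_\alpha$ depending only on $\alpha$; choosing any $R=[R_\eps]\in\GenR_{ss}$ with $R_\eps\to\infty$ (for instance $R_\eps=\log(1/\eps)$) then gives $\abs{\partial^\alpha\phi(x)}\le R$ for every fixed $\alpha$, since each $C_\alpha$ is eventually dominated by $R_\eps$. Shrinking to a nongeneralized neighbourhood $V$ of $x_0$ on which $\phi\ge\frac12$ (possible by continuity and $\phi(x_0)=1$) yields $\abs{\phi(x)}\ge\frac12\ge 1/R$ for all $x\in\widetilde V$. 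The Fourier condition is untouched, so (3) holds.

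The substance is $(3)\Rightarrow(2)$, where the key is a cut-off argument adapted to the nongeneralized cone, parallel to Proposition \ref{cut-off} but driven by Lemma \ref{lemma-cone}. Given the generalized $\phi$ with slow scale bounds on $\widetilde V$, I would pick a classical $\psi\in\test(\Omega)$ with $\psi(x_0)=1$ and $\operatorname{supp}\psi\subseteq V$. The slow scale upper and lower bounds on $\phi$ make $w:=\psi/\phi\in\Gen_c^\infty(\Omega)$ (after multiplying by a classical cut-off equal to $1$ near $\operatorname{supp}\psi$), so that $\fourier w\in\fourier{\Gen_c^\infty(\Omega)}\subseteq\Gen_{ss}(\GenR^d)$ and $w\phi=\psi$, whence $\psi u=w\,(\phi u)$ and
\[\fourier{\psi u}(\xi)=\int\fourier w(\eta)\,\fourier{\phi u}(\xi-\eta)\,d\eta.\]
Fix $r>0$ with $\{\xi:\abs[\big]{\frac{\xi}{\abs\xi}-\xi_0}\le 3r\}\subseteq\Gamma$ and set $\Gamma':=\{\xi:\abs[\big]{\frac{\xi}{\abs\xi}-\xi_0}\le r\}$, a smaller conic neighbourhood of $\xi_0$. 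For $\xi\in\widetilde{\Gamma'}\cap\widetilde\R^d_{fs}$ and $\eta\in\GenR^d_{ss}$, the ratio $\abs\eta/\abs\xi$ is a fast scale infinitesimal, so in particular $\abs\eta\le r\abs\xi$; thus Lemma \ref{lemma-cone} (with $\zeta=\xi$) places $\xi-\eta$ in the $3r$-cone $\subseteq\widetilde\Gamma$, while $\abs{\xi-\eta}\ge\frac12\abs\xi$ keeps it fast scale, whence $\fourier{\phi u}(\xi-\eta)=0$. On the other hand $\fourier w(\eta)=0$ for $\eta\in\GenR^d_{fs}$. Exactly as in Proposition \ref{cut-off}, the integrand is then the zero element of $\Gen_\Schwartz$ by \cite[Lemma~4.1]{HVPointwise}, so $\fourier{\psi u}(\xi)=0$ on $\widetilde{\Gamma'}\cap\widetilde\R^d_{fs}$; with the pair $(\psi,\Gamma')$ this is (2), and via $(1)\Leftrightarrow(2)$ also (1).

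I expect the main obstacle to be this cone step: verifying that \emph{every} slow scale $\eta$ beats \emph{every} fast scale $\xi$ in the sense $\abs\eta\le r\abs\xi$ with a fixed real $r$, so that Lemma \ref{lemma-cone} applies uniformly and the integrand vanishes globally in $\Gen_\Schwartz$. This is precisely where confining to slow scale neighbourhoods (rather than sharp ones) is used, and it is the reason the argument must mirror Proposition \ref{cut-off} rather than a naive classical convolution estimate.
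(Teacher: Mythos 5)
Your proposal is correct and follows essentially the same route as the paper: $(1)\Leftrightarrow(2)$ via Lemma \ref{equiv-zero-in-standard-cone}, $(2)\Rightarrow(3)$ by observing that any classical $\phi\in\test(\Omega)$ with $\phi(x_0)=1$ works (with a slow scale $R$ and a neighbourhood where $\abs\phi\ge\frac12$), and $(3)\Rightarrow(2)$ by choosing $\psi\in\test(V)$ with $\psi(x_0)=1$, noting $\frac\psi\phi\in\Gen_c^\infty(\Omega)$, and running the convolution argument of Proposition \ref{cut-off} on a shrunken cone $\Gamma'\subset\Gamma$. In fact your explicit use of Lemma \ref{lemma-cone} to place $\xi-\eta$ in $\widetilde\Gamma$ for slow scale $\eta$ and fast scale $\xi\in\widetilde{\Gamma'}$ spells out precisely the adaptation that the paper compresses into the phrase ``by Proposition \ref{cut-off}''.
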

\begin{proof}
$(1)\Leftrightarrow(2)$: by Lemma \ref{equiv-zero-in-standard-cone}.

$(2)\implies(3)$: any $\phi\in\test(\Omega)$ with $\phi(x_0)=1$ satisfies the requirements of $\phi$ in (3).

$(3)\implies(2)$: choose $\psi\in\test(V)$ with $\psi(x_0)=1$. As in Corollary \ref{def-regular-with-cutoff}, $\frac\psi\phi\in\Gen^\infty_c(\Omega)$ and we can find a conic neighbourhood $\Gamma'\subset\Gamma$ of $x_0$ s.t.\ $\fourier{\psi u}(\xi)=0$ for each $\xi\in\widetilde{\Gamma'}\cap\GenR^d_{fs}$ by Proposition \ref{cut-off}.
\end{proof}

\begin{thm}\label{consistency}
Let $u\in\Gen(\Omega)$. Then $u$ is $\Gen^\infty$-microlocally regular at $(x_0,\xi_0)\in\Omega\times \Sphere$ iff there exists $r\in\R_{>0}$ such that $u$ is $\widetilde\Gen^\infty$-microlocally regular at $(x,\xi)$ for each $x\in\GenR^d$ with $\abs{x-x_0}\le r$ and each $\xi\in\widetilde \Sphere$ with $\abs{\xi-\xi_0}\le r$.
\end{thm}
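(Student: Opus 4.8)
The plan is to run both implications through the pointwise, conewise characterizations already available: Lemma \ref{classical-regularity-expressed-pointwise} for the classical $\Gen^\infty$-notion and Corollary \ref{def-regular-with-cutoff}(2) (with Definition \ref{df-regular}) for the refined $\widetilde\Gen^\infty$-notion. For the forward direction, suppose $u$ is $\Gen^\infty$-microlocally regular at $(x_0,\xi_0)$. By Lemma \ref{classical-regularity-expressed-pointwise} and a cutoff change as in its proof $(3)\Rightarrow(2)$, I may fix $\phi\in\test(\Omega)$ with $\phi\equiv 1$ on a ball $B(x_0,\delta)$ and a nongeneralized conic neighbourhood $\Gamma$ of $\xi_0$ containing $\{\eta:\abs{\eta/\abs\eta-\xi_0}<\rho_0\}$ for some $\rho_0>0$, such that $\fourier{\phi u}(\eta)=0$ for all $\eta\in\widetilde\Gamma\cap\GenR^d_{fs}$. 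I claim $r:=\tfrac12\min(\delta,\rho_0)$ works. If $\abs{x-x_0}\le r$ then for $x'\approx_\fast x$ one has $\abs{x'-x_0}\le r+\caninf^a<\delta$ for small $\eps$, so $\phi=1$ on $\monad_\fast(x)$; and if $\abs{\xi-\xi_0}\le r$ then for $\eta\in\Gamma_\fast(\xi)\cap\GenR^d_{fs}$ one has $\abs{\eta/\abs\eta-\xi_0}\le\abs{\eta/\abs\eta-\xi}+\abs{\xi-\xi_0}\le\caninf^a+r<\rho_0$ for small $\eps$, so $\eta\in\widetilde\Gamma$ and $\fourier{\phi u}(\eta)=0$. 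By Corollary \ref{def-regular-with-cutoff}(2) this is $\widetilde\Gen^\infty$-microlocal regularity at $(x,\xi)$.

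The backward direction is the substantial part. Fix $r$ from the hypothesis, the cone $\Gamma_1=\{\eta:\abs{\eta/\abs\eta-\xi_0}\le r\}$ and a smaller cone $\Gamma\subset\Gamma_1$ still a neighbourhood of $\xi_0$. Every direction of $\widetilde\Sphere\cap\widetilde\Gamma_1$ lies within $r$ of $\xi_0$, so the hypothesis gives $\widetilde\Gen^\infty$-regularity at $(y,\xi')$ for all generalized $y$ with $\abs{y-x_0}\le r$ and all $\xi'\in\widetilde\Sphere\cap\widetilde\Gamma_1$. I first establish a version of Lemmas \ref{lemma-projection-of-wave-front-1} and \ref{lemma-projection-of-wave-front-2} uniform over the base point: for each $m$ there is $\eps_m$ so that for $\eps\le\eps_m$, every $y$ with $\abs{y-x_0}\le r$, and every $\zeta\in\Gamma$ with $\abs\zeta\ge\eps^{-2/m}$, one has $\abs{\fourier{\phi^{(y)}_{2m,\eps}u_\eps}(\zeta)}\le\eps^{m-1}$, where $\phi^{(y)}_{2m,\eps}(x):=\phi_0((x-y)/\eps^{1/(2m)})$. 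The proof repeats the contradiction argument of Lemma \ref{lemma-projection-of-wave-front-1} while additionally extracting a sequence of base points $y_{\eps_n}$ and extending it to a generalized $y=[y_\eps]$ with $\abs{y-x_0}\le r$; regularity at $(y,\xi_0)$, available for every such $y$, gives the contradiction, and the convolution estimate of Lemma \ref{lemma-projection-of-wave-front-2} (via Lemma \ref{lemma-cone}) is uniform in $y$ since $\norm{\fourier{\phi^{(y)}_{k,\eps}}}_{L^1}=\norm{\fourier{\phi_0}}_{L^1}$ is translation invariant.

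Given the uniform estimate, I pass to a fixed cutoff by covering. Fix $\phi\in\test(B(x_0,r))$ with $\phi(x_0)=1$, and a diagonal scale $m_\eps\to\infty$ chosen as in the proof of Theorem \ref{proj-of-WF-is-singular-support} so the uniform estimate holds with $m=m_\eps$ at parameter $\eps$. I cover $\mathrm{supp}\,\phi$ by the $O(\eps^{-d/(2m_\eps)})$ balls $B(y_j,\eps^{1/(2m_\eps)})$ with $y_j\in\eps^{1/(2m_\eps)}\Z^d$ and take a subordinate partition of unity $\sum_j\chi_{j,\eps}\equiv 1$ on $\mathrm{supp}\,\phi$, $\chi_{j,\eps}$ supported in $B(y_j,\eps^{1/(2m_\eps)})$. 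Then $\phi u_\eps=\sum_j(\phi\chi_{j,\eps})u_\eps$, each $\phi\chi_{j,\eps}$ is a uniformly bounded multiple of a cutoff of type $\phi^{(y_j)}_{2m_\eps,\eps}$, so the uniform estimate yields $\abs{\fourier{(\phi\chi_{j,\eps})u_\eps}(\zeta)}\le\eps^{m_\eps-N-1}$ for $\zeta\in\Gamma$ with $\abs\zeta\ge\eps^{-2/m_\eps}$; summing the $O(\eps^{-d/(2m_\eps)})$ terms, a slow scale factor since $m_\eps\to\infty$, still beats every power of $\eps$. By Lemma \ref{equiv-zero-in-standard-cone} this gives $\fourier{\phi u}(\eta)=0$ for all $\eta\in\widetilde\Gamma\cap\GenR^d_{fs}$, and Lemma \ref{classical-regularity-expressed-pointwise} then yields $\Gen^\infty$-microlocal regularity at $(x_0,\xi_0)$.

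The main obstacle is exactly this passage from the shrinking generalized cutoffs $\phi^{(y)}_{2m,\eps}$ to one fixed test function: a trivial example with two nearby singularities in the same codirection shows the cutoff support cannot be enlarged for free, so the regularity hypothesis over the \emph{entire} ball $\abs{x-x_0}\le r$ must be exploited through uniformity in the base point, and the partition-of-unity sum must be controlled by balancing the slow-scale number $\eps^{-d/(2m_\eps)}$ of pieces against the super-polynomial per-piece decay. Care is also needed so that multiplication by the $\chi_{j,\eps}$, whose derivatives grow like $\eps^{-\abs\alpha/(2m_\eps)}$, does not spoil the cone estimate; this is precisely why the convolution step of Lemma \ref{lemma-projection-of-wave-front-2} should be built into the uniform estimate rather than applied afterwards.
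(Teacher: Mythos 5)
Your proposal is correct and, in its core mechanics, is the paper's own proof: the forward direction via Lemma \ref{classical-regularity-expressed-pointwise} together with Corollary \ref{def-regular-with-cutoff}, and, for the converse, exactly the paper's three moves --- (i) upgrading \eqref{eq-regular-for-each-xi-2} to a statement uniform over all classical base points in the ball, proved by the same contradiction argument (extract a net $y_{\eps_n}$ of failing base points together with failing directions, pass to the generalized point and direction, and invoke the hypothesis there; the paper phrases this as ``since otherwise, one constructs $x=[x_\eps]\in\widetilde V$ contradicting \eqref{eq-regular-for-each-xi-2}''), (ii) diagonalizing to a scale $m_\eps\to\infty$, and (iii) covering by a grid of $O(\eps^{-d/(2m_\eps)})$ translated cutoffs, with the slow-scale number of pieces beaten by the superpolynomial per-piece decay $\eps^{m_\eps-N-1}$. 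The only genuine divergence is the final assembly. The paper does \emph{not} form a partition of unity and does \emph{not} multiply back by a fixed classical cutoff: it simply sums the translated bumps, $\psi_\eps:=\sum_{y\in G_\eps}\tau_y(\phi_{2m_\eps,\eps})$, accepts that $\psi:=[\psi_\eps]$ is merely $\ge 1$ on a fixed classical neighbourhood (with overlap bounded by $C_d$, hence slow-scale derivative bounds $C_{d,\alpha}\eps^{-\abs{\alpha}/(2m_\eps)}$), and closes via characterization (3) of Lemma \ref{classical-regularity-expressed-pointwise}, which was engineered precisely so that no exact cutoff with $\psi\equiv 1$ is needed --- the division by $\psi$ is hidden inside that lemma's proof via Proposition \ref{cut-off}. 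Your variant, writing $\phi u_\eps=\sum_j(\phi\chi_{j,\eps})u_\eps$ against a partition of unity, is workable but costs an extra step, and one statement in it is not literally true: $\phi\chi_{j,\eps}$ is \emph{not} a ``uniformly bounded multiple'' of the translated bump $\phi_0\bigl((x-y_j)/\eps^{1/(2m_\eps)}\bigr)$; after arranging supports one only has a product $(\phi\chi_{j,\eps})\cdot\phi_0\bigl((x-y_j)/\eps^{1/(2m_\eps)}\bigr)$, so each piece requires one more convolution estimate of the kind in Lemma \ref{lemma-projection-of-wave-front-2} (or Proposition \ref{cut-off}), with a further slight shrinking of the cone via Lemma \ref{lemma-cone}. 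You correctly diagnose this in your closing paragraph, and your proposed repair --- proving the uniform estimate directly for the whole family of admissible cutoffs, so the multiplication is absorbed before summation --- is sound, since the contradiction argument tolerates it. What the paper's summed-bump route buys is precisely the avoidance of this extra convolution layer; what your route buys is a conclusion phrased with a genuinely classical test function $\phi$, at the price of a heavier intermediate estimate.
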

\begin{proof}
$\Rightarrow$: by Corollary \ref{def-regular-with-cutoff} and Lemma \ref{classical-regularity-expressed-pointwise}.

$\Leftarrow$: Let $V:= B(0,r)$ and $\Gamma:=\{\xi\in\R^d\setminus\{0\}: \abs[\big]{\frac{\xi}{\abs\xi}-\xi_0}<r\}$. Let $m\in\N$. Let as before $\phi_{m,\eps}(x):=\phi_0\bigl(\frac{x-x_0}{\eps^{1/m}}\bigr)$ (now with the given $x_0\in\Omega$). Denote the translation $\tau_y(x):= y+x$. Then \eqref{eq-regular-for-each-xi-2} holds even with $\tau_x (\phi_{2m,\eps})$ instead of $\phi_{2m,\eps}$, for each $x\in\widetilde V$. This implies that
\[
(\exists \eps_m) (\forall \eps\le\eps_m) (\forall x\in V) (\forall \xi\in\Gamma, \abs{\xi}\ge \eps^{-2/m}) \bigl(\abs[\big]{\Fourier[\tau_x(\phi_{2m,\eps}) u_\eps](\xi)}\le\eps^{m-1}\bigr),
\]
since otherwise, one constructs $x=[x_\eps]\in \widetilde V$ contradicting \eqref{eq-regular-for-each-xi-2} for this particular $x$. W.l.o.g., $(\eps_m)_m$ decreasingly tends to $0$. Let $m_\eps:= m$ for each $\eps\in (\eps_{m+1},\eps_m]$. Then $m_\eps\to+\infty$ als $\eps\to 0$, and for small $\eps$, we have that
\[(\forall x\in V) (\forall \xi\in\Gamma, \abs{\xi}\ge \eps^{-2/m_\eps}) \bigl(\abs[\big]{\Fourier[\tau_x(\phi_{2m_\eps,\eps}) u_\eps](\xi)}\le\eps^{m_\eps-1}\bigr).\]
Now consider a grid
\[G_\eps:=\Bigl\{\tfrac{\eps^{1/(2m_\eps)}}{\sqrt d}(k_1,\dots,k_d): k_j\in\Z, \abs{k_j} < r\eps^{-1/(2m_\eps)}\ (j=1,\dots,d)\Bigr\}.\]
Then $G_\eps$ contains at most $(2r\eps^{-1/(2m_\eps)}+1)^d\le 2 (2r)^d\eps^{-d/(2m_\eps)}$ elements, for small $\eps$. Let $\psi_\eps:= \sum_{y\in G_\eps}\tau_y(\phi_{2m_\eps,\eps})$. As $G_\eps\subseteq V$, $\abs[\big]{\fourier{\psi_\eps u_\eps}(\xi)}\le 2(2r)^d\eps^{m_\eps-1-d/(2m_\eps)}$ for each $\xi\in\Gamma$ with $\abs{\xi}\ge \eps^{-2/m_\eps}$, for small $\eps$.

Now let $x\in W:=\{x\in\R^d:\max\{\abs{x_1},\dots, \abs{x_d}\}<r/\sqrt d\}$ arbitrary. Then there exists $y\in G_\eps$ such that $\abs{x_j-y_j}< \tfrac{\eps^{1/(2m_\eps)}}{2\sqrt d}$ ($j=1,\dots,d$), hence $\psi_\eps(x_0+x)\ge \tau_{-y}(\phi_{2m_\eps,\eps})(x_0+x) = 1$. On the other hand, there is only at most a finite number $C_d$ (only depending on $d$) of elements $y\in G_\eps$ for which $\abs{x-y}\le \eps^{1/(2m_\eps)}$. Hence for each $\alpha\in\N^d$,
\[\abs{\partial^\alpha \psi_\eps(x_0+x)}\le C_d \sup_{y\in V} \abs{\partial^\alpha \phi_{2m_\eps,\eps}(y)}\le C_{d,\alpha} \eps^{-\abs{\alpha}/(2m_\eps)} \]
Hence $\psi:=[\psi_\eps]\in\Gen_c^\infty(\Omega)$, $\fourier{\psi u}(\xi)=0$ for each $\xi\in \GenR^d_{fs}\cap \widetilde\Gamma$, and there exists $c\approx_\slow 0$ such that $\abs{\partial^\alpha\psi(x)}\le 1/c$ and $\abs{\psi(x)}\ge c$ for each $x\in(\tau_{x_0}(W))\sptilde$ and $\alpha\in\N^d$. The result follows by Lemma \ref{classical-regularity-expressed-pointwise}.
\end{proof}

We denote by $\iota(u)\in\Gen(\Omega)$ the embedded image of $u\in\test'(\Omega)$.
\begin{cor}
Let $u\in\test'(\Omega)$ and $(x_0,\xi_0)\in \Omega\times\Sphere$. Then $(x_0,\xi_0)\in WF(u)$ iff for each $r\in\R_{>0}$, there exist $x\in\GenR^d$ with $\abs{x- x_0}\le r$ and $\xi\in\widetilde\Sphere$ with $\abs{\xi-\xi_0}\le r$ such that $\iota(u)$ is not $\widetilde\Gen^\infty$-microlocally regular at $(x,\xi)$.
\end{cor}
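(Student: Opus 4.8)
The plan is to obtain this statement as a purely logical consequence of Theorem \ref{consistency}, combined with the classical compatibility of $\Gen^\infty$-microlocal analysis with Hörmander's microlocal analysis for embedded distributions. Concretely, I would work with the single generalized function $\iota(u)\in\Gen(\Omega)$ and chain two equivalences, the first imported from the literature and the second being the contrapositive of Theorem \ref{consistency}.

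The starting point is the well-known compatibility result for the $\Gen^\infty$-wave front set \cite{Hormann99,GarettoPhD}: for $u\in\test'(\Omega)$ and $(x_0,\xi_0)\in\Omega\times\Sphere$, the embedded image $\iota(u)$ is $\Gen^\infty$-microlocally regular at $(x_0,\xi_0)$ if and only if $(x_0,\xi_0)\notin WF(u)$; equivalently, $(x_0,\xi_0)\in WF(u)$ precisely when $\iota(u)$ fails to be $\Gen^\infty$-microlocally regular at $(x_0,\xi_0)$. I would simply invoke this, since it underlies the entire $\Gen^\infty$-microlocal theory: its proof localizes $u$ by a test function $\phi$ with $\phi(x_0)=1$ and compares the rapid decay of $\fourier{\phi u}$ in a cone around $\xi_0$ with the estimate \eqref{eq-classical-regularity} on $\fourier{\phi u_\eps}$, where $u_\eps$ is the mollification of $u$. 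This lies outside the scope of the present refinement and is taken as given.

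Next I would apply Theorem \ref{consistency} to $\iota(u)\in\Gen(\Omega)$. It asserts that $\iota(u)$ is $\Gen^\infty$-microlocally regular at $(x_0,\xi_0)$ if and only if there exists $r\in\R_{>0}$ such that $\iota(u)$ is $\widetilde\Gen^\infty$-microlocally regular at $(x,\xi)$ for every $x\in\GenR^d$ with $\abs{x-x_0}\le r$ and every $\xi\in\widetilde\Sphere$ with $\abs{\xi-\xi_0}\le r$. Taking the contrapositive, $\iota(u)$ fails to be $\Gen^\infty$-microlocally regular at $(x_0,\xi_0)$ exactly when for every $r\in\R_{>0}$ there exist such $x$ and $\xi$ at which $\iota(u)$ is not $\widetilde\Gen^\infty$-microlocally regular. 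Composing this with the equivalence of the previous paragraph yields precisely the asserted characterization of $(x_0,\xi_0)\in WF(u)$.

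The only genuine content beyond bookkeeping of quantifiers is the classical compatibility statement, which I am importing; once it is available, the corollary is an immediate contraposition of Theorem \ref{consistency}, and I expect no obstacle in the logical step itself. The ranges of $x$ and $\xi$ in the corollary match verbatim those in Theorem \ref{consistency}, so the negation of ``there exists $r$ such that for all $x,\xi$'' is cleanly ``for all $r$ there exist $x,\xi$.'' The only point requiring a moment of care is ensuring that the cited compatibility result is stated for the same normalization of directions, namely on $\Sphere$ rather than on all of $\R^d\setminus\{0\}$, which it is.
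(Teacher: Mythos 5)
Your proof is correct and coincides with the paper's own argument: the paper likewise cites the compatibility result from \cite{GarettoPhD} (that $(x_0,\xi_0)\in WF(u)$ iff $\iota(u)$ fails to be $\Gen^\infty$-microlocally regular there) and then concludes by Theorem \ref{consistency}, exactly as you do via contraposition. No gaps; your extra care about the quantifier bookkeeping and the normalization on $\Sphere$ is sound but was left implicit in the paper.
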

\begin{proof}
By \cite{GarettoPhD}, $(x_0,\xi_0)\in WF(u)$ iff $\iota(u)$ is not $\Gen^\infty$-microlocally regular at $(x_0,\xi_0)$. The result follows by Proposition \ref{consistency}.
\end{proof}

\end{document}